\newcommand{\Z}{\mathbb{Z}}
\newcommand{\Ulam}{\mathscr{U}}
\newcommand{\Sierp}{\mathcal{S}}
\newcommand{\concat}{{}^\frown}
\theoremstyle{plain}
\newtheorem{theorem}{Theorem}[section]
\newtheorem{definition}[theorem]{Definition}
\newtheorem{corollary}[theorem]{Corollary}
\newtheorem{conjecture}{Conjecture}[section]
\newtheorem{lemma}[theorem]{Lemma}
\theoremstyle{remark}
\newtheorem{remark}{Remark}[section]
\begin{document}
\title{Distributions of Ulam Words up to Length 30}

\author[Adutwum]{Paul Adutwum}
\address{Bates College}
\email[Paul Audutwum]{padutwum@bates.edu}

\author[Clark]{Hopper Clark}
\address{Bates College}
\email[Hopper Clark]{hclark2@bates.edu}

\author[Emerson]{Ro Emerson}
\address{Bates College}
\email[Ro Emerson]{remerson@bates.edu}

\author[Sheydvasser]{Alexandra (Sasha) Sheydvasser}
\address{University of Massachusetts, Amherst}
\email[Sasha Sheydvasser]{asheydvasser@umass.edu}

\author[Sheydvasser]{Arseniy (Senia) Sheydvasser}
\address{Department of Mathematics, Bates College}
\email[Senia Sheydvasser]{ssheydvasser@bates.edu}

\author[Tougouma]{Axelle Tougouma}
\address{Bates College}
\email[Axelle Tougouma]{atougouma@bates.edu}

\date{\today}

\begin{abstract}
We further explore the notion of Ulam words considered by Bade, Cui, Labelle, and Li, giving some lower bounds on how many there are of a given length. Gaps between words and words of special type also reveal remarkable structure. By substantially increasing the number of computed terms, we are also able to sharpen some of the conjectures made by Bade et al.
\end{abstract}

\maketitle

\section{Introduction:}\label{section: introduction}

In their 2020 paper\cite{BCLL_2020}, Bade, Cui, Labelle, and Li introduced the notion of Ulam words, defined as follows. Consider the free semigroup $S[\{0,1\}]$ on two generators $0$ and $1$. We say that $0$ and $1$ are \emph{Ulam} and then define all other Ulam words inductively: a word $w \neq 0,1$ is Ulam if and only if there exists exactly one pair of Ulam words $u_1 \neq u_2$ such that $w = u_1 \concat u_2$. (Here, $\concat$ denotes concatenation.) We shall denote the entire set of Ulam words as $\Ulam$, and Ulam words of length $n$ by $\Ulam_n$. It is easy to check that:
    \begin{align*}
        \Ulam_1 &= \{0,1\} & \Ulam_3 &= \{001,011,100,110\} \\
        \Ulam_2 &= \{01,10\} & \Ulam_4 &= \{0001,0010,0100,0111,1000,1011,1101,1110\}.
    \end{align*}
Bade et al. computed all Ulam words up to length $24$; we were able to compute up to length $30$. While this might appear as a small improvement at first glance, because the number of Ulam words of length $n$ appears to (almost) double on each iteration, in reality, this represents nearly $60$ times as much data.

Of course, it is an open question whether $\#\Ulam_n$ grows exponentially. The best general bound that we can prove is linear, mainly using explicit constructions of words from Bade et al.

\begin{theorem}\label{theorem: linear growth rate}
For all $n \geq 6$, $\#\Ulam_n \geq 2n+4$.
\end{theorem}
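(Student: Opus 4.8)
The plan is to write down, for every $n\ge 6$, an explicit list of at least $2n+4$ pairwise distinct length-$n$ words, each of which is Ulam. All of them have the shape $0^a v 0^b$ for a short ``core'' $v$, or are obtained from such a word by the two structure-preserving symmetries of $\Ulam$: the letter-swap $0\leftrightarrow 1$ and the reversal (both preserve Ulam-ness, since the inductive definition is symmetric in the two generators and in reading order). The cores I would use are $v=1$, $v=11$, and $v=101$.

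The first task is to determine, by induction on word length, exactly which words of these shapes are Ulam. The key point is that a proper prefix of $0^a v 0^b$ is either a power $0^i$ -- Ulam only for $i=1$, since for $i\ge 2$ the unique factorization $0^i=0\concat 0^{i-1}$ has no Ulam second factor -- or else it is $0^a v'$ for a nonempty prefix $v'$ of $v 0^b$; so the inductive hypothesis applied to the shorter words of these shapes tells us exactly which proper prefixes are Ulam, hence exactly how many ways the word splits into two distinct Ulam words. For $v=1$ the bookkeeping reproduces Pascal's triangle modulo $2$, namely $f(a,b)=f(a-1,b)\oplus f(a,b-1)$ with $f(a,0)=f(0,b)=1$, so $0^a 1 0^b$ is Ulam iff $\binom{a+b}{a}$ is odd (essentially due to Bade et al.). For $v=11$ the factorization $0^a 1\concat 1 0^b$ is always legal (both factors are automatically Ulam, and distinct once $a+b\ge 1$), and chasing the recursion shows no competing factorization exists precisely when $a+b$ is odd. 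For $v=101$ there is again an always-legal factorization -- $0^a 1 0\concat 1 0^b$ when $a$ is even and $0^a 1\concat 0 1 0^b$ when $b$ is even, exactly one of which applies when $a+b$ is odd -- so $0^a 1 0 1 0^b$ is Ulam iff $a+b$ is odd (apart from a couple of small exceptions with $a+b\le 2$, irrelevant here).

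Granting these criteria, the count is a case split on the parity of $n$. Write $s(m)$ for the number of $1$'s in the binary expansion of $m$; by Lucas's theorem there are exactly $2^{s(n-1)}$ Ulam words $0^a 1 0^b$ with $a+b=n-1$, and likewise $2^{s(n-1)}$ words $1^a 0 1^b$. If $n$ is even then $n-3$ is odd, so all $n-2$ words $0^a 1 0 1 0^b$ and all $n-2$ words $1^a 0 1 0 1^b$ (with $a+b=n-3$) are Ulam; since $n-1$ is odd and at least $5$ we have $s(n-1)\ge 2$, and the list already contains at least $2\cdot 2^{s(n-1)}+2(n-2)\ge 8+2(n-2)=2n+4$ words. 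If $n$ is odd then $n-2$ is odd, so the $n-1$ words $0^a 1 1 0^b$ and the $n-1$ words $1^a 0 0 1^b$ (with $a+b=n-2$) are Ulam, giving at least $2\cdot 2^{s(n-1)}+2(n-1)$ words, which exceeds $2n+4$ unless $n-1$ is a power of $2$. In the remaining case $n=2^k+1$ with $k\ge 3$ the list has $2n+2$ words, and I would adjoin the two further words $0^3 1^{n-3}$ and $0^{n-3}1^3$: they are Ulam because the slice $b=3$ of $g(a,b):=[0^a 1^b\text{ is Ulam}]$ is $4$-periodic in $a$, running $1,1,0,0,\dots$ straight from the mod-$2$ recursion, and $n-3=2^k-2\equiv 2\pmod 4$, while $g(a,b)=g(b,a)$ handles the transpose. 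Finally every word on the list is distinct, since the number of $1$'s in each word is constant on each family: it is $1$ for the $0^a 1 0^b$, $n-1$ for $1^a 0 1^b$, $2$ for $0^a 1 1 0^b$ and $0^a 1 0 1 0^b$, $n-2$ for $1^a 0 0 1^b$ and $1^a 0 1 0 1^b$, and $3$ and $n-3$ for the two adjoined words -- all pairwise distinct once $n\ge 6$.

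The only real difficulty is the induction of the second paragraph: the prefix analysis of $0^a v 0^b$ refers to several interacting shapes at once ($0^a 1^b$, $0^a 1 0^b$, $0^a 1 0 1$, $0^a 1 0 1 0^b$, and so on), so one must run a single induction carrying the Ulam status of all of them simultaneously and be disciplined about the handful of small configurations where the clean parity statements fail. The remaining ingredients -- the counting, the distinctness check via the number of $1$'s, and the observation that $n=2^k+1$ is exactly where the bound is tight -- are routine.
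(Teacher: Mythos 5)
Your construction is correct, and for even $n$ it is literally the paper's argument: the $n-2$ words $0^a1010^b$ plus the $G(n-1)=2^{s(n-1)}\geq 4$ words $0^a10^b$, doubled by the complement symmetry and kept distinct by counting $1$'s. The divergence is in the odd case, and it is a genuine one. The paper splits odd $n$ by residue modulo $4$: for $n\equiv 3\pmod 4$ it combines $0^a1^20^b$ with the Gould family (where $G(n-1)\geq 4$ is automatic), while for $n\equiv 1\pmod 4$ --- where $G(n-1)$ can drop to $2$ --- it proves a dedicated lemma characterizing when $0^a1^40^b$ is Ulam (namely $a+b\equiv 1\pmod 4$) and throws in that entire family of $n-3$ words, giving roughly $4n-8$ in total. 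You instead isolate the true obstruction --- the Gould family is deficient only when $n-1$ is a power of two --- and patch exactly that case with the two words $0^31^{n-3}$ and $0^{n-3}1^3$, whose Ulam-ness follows from the $4$-periodic pattern of $0^a1^3$ (equivalently the paper's lemma on $1^30^{n-3}$, a special case of its Sierpi\'nski theorem). This is leaner: you avoid the $0^a1^40^b$ lemma entirely and add only two words where the count is tight, at the price of invoking the $1^30^m$ characterization, which the paper proves anyway (and in fact uses inside its proof of the $1^4$ lemma). Both routes rest identically on Bade et al.'s criteria for $0^a10^b$, $0^a1^20^b$, and $0^a1010^b$, and both get pairwise distinctness for free from the number of $1$'s in each family; your arithmetic checks out in every case, including the tight value $2n+4$ at $n=2^k+1$.
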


However, we are able to demonstrate that there is a {\it subsequence} of Ulam words that grows exponentially, using a completely different argument.

\begin{theorem}\label{theorem: exponential growth rate}
There exists $1 < \alpha_0 \leq 2$ such that for all $1 < \alpha < \alpha_0$,
    $$
    \limsup_{n\rightarrow\infty} \frac{\#\Ulam_n}{\alpha^n} = \infty.
    $$
Concretely,
    $$
    \alpha_0 = \left(\frac{101847671}{31}\right)^{1/5} \approx 1.648996
    $$
suffices.
\end{theorem}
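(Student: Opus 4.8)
The plan is to establish a lower bound for $\#\Ulam_n$ of ``supermultiplicative up to a bounded defect'' type \emph{along a sparse subsequence of lengths}, and then to feed into it the largest value we have tabulated, namely $\#\Ulam_{30} = 101847671$. Concretely, one wants an inequality of the shape
\[
\#\Ulam_{m+n}\ \geq\ \frac{1}{31}\,\#\Ulam_{m}\cdot\#\Ulam_{n},
\]
valid when $m$ is the reference length $30$ and $n$ ranges over a suitable infinite set of lengths (the multiples of $30$, say), where $31$ is the defect constant the construction forces once one factor has length $30$. Here $30$ plays no special role in the mathematics --- it is just the largest length for which $\#\Ulam_n$ is known --- and since the defect enters only through a $30$th root, the resulting $\alpha_0$ creeps toward the true growth rate as more terms are computed. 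The restriction to a subsequence is exactly what yields the stated $\limsup$ rather than a $\liminf$.

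Granting such an inequality, the remainder is bookkeeping. Iterating it with the smaller factor always of length $30$ gives $\#\Ulam_{30(k+1)}\geq \tfrac1{31}\#\Ulam_{30}\cdot\#\Ulam_{30k}$, hence $\#\Ulam_{30k}\geq 31\,(\#\Ulam_{30}/31)^{k} = 31\,(101847671/31)^{k}$. Consequently, for $\alpha>1$,
\[
\frac{\#\Ulam_{30k}}{\alpha^{30k}}\ \geq\ 31\left(\frac{101847671}{31\,\alpha^{30}}\right)^{k}\ \longrightarrow\ \infty
\]
as $k\to\infty$ exactly when $\alpha^{30}<101847671/31$, i.e.\ when $\alpha<\alpha_0:=(101847671/31)^{1/30}$, the asserted value. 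That $1<\alpha_0\leq 2$ is immediate: $\#\Ulam_{30}>31$ gives the left inequality, and $\#\Ulam_{30}\leq 2^{30}$ gives $\alpha_0\leq (2^{30}/31)^{1/30}<2$.

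The substantive step is the near-supermultiplicative inequality, and I would attack it through concatenation. Fix the reference length $m=30$. For $u\in\Ulam_{30}$ and $v\in\Ulam_{n}$ the word $u\concat v$ always carries the Ulam factorization $u\mid v$ (legitimate since $u,v\in\Ulam$ and, as $30\neq n$, $u\neq v$), and for fixed block length $30$ the map $(u,v)\mapsto u\concat v$ is injective; so $u\concat v$ \emph{fails} to be Ulam only when it admits a second Ulam factorization, split at some $i\neq 30$, which forces a proper prefix or suffix of $u$ --- together with the complementary factor --- to be Ulam. The task is then to discard, at each length-$30$ step of the iteration, only a bounded fraction of the available Ulam words so as to destroy every such rival factorization while keeping a subfamily rich enough to iterate; the reversal- and complementation-symmetries of $\Ulam$ and the scarcity of short Ulam prefixes and suffixes are the levers, and the table up to length $30$ would be used to verify the base cases of this estimate and to certify a surviving fraction of at least $1/31$.

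The main obstacle I foresee is precisely keeping the defect a genuine \emph{constant}. As $n$ grows, $u\concat v$ has ever more candidate split positions, so one must show that the number of positions that can actually host a rival Ulam factorization stays bounded --- ideally controlled by the $30$ internal positions of the fixed block --- and that the discarded words compound only by the fixed factor $31$ per step. Everything downstream of this is the elementary computation above. A possibly cleaner variant would be to isolate outright a subfamily $\mathcal{A}\subseteq\Ulam$ that is literally closed under the relevant concatenation, so that $\#\mathcal{A}_n$ is honestly supermultiplicative and one merely replaces $\#\Ulam_{30}$ by $\#(\mathcal{A}\cap\Ulam_{30})$ in the bootstrap; the price is exhibiting such a self-similar family and computing its size at length $30$.
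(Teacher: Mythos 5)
Your bookkeeping at the end is fine, and you have even landed on the correct reading of the constant: the numerical value $\alpha_0\approx 1.648996$ corresponds to $(101847671/31)^{1/30}$, i.e.\ the exponent is really $1/30$. But the argument has a genuine gap at exactly the point you flag yourself: the near-supermultiplicative inequality $\#\Ulam_{m+n}\geq\tfrac{1}{31}\#\Ulam_m\cdot\#\Ulam_n$ with a \emph{constant} defect is never established, only named as a target together with a sketch of an attack (discard a bounded fraction of words so as to kill every rival factorization). Nothing in that sketch controls the number of split positions $i\neq 30$ at which $u\concat v$ could acquire a second Ulam factorization --- the rival split can occur anywhere in the long block, not just inside the fixed length-$30$ block, so the number of candidates grows with $n$ --- and no argument is given that the surviving subfamily is simultaneously a positive fraction of $\Ulam_n$ and closed under the next concatenation step. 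As written, the theorem is proved only conditionally on an inequality you acknowledge you cannot yet prove.

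The paper avoids this entirely by proving something much weaker and compensating in the bootstrap. Its key lemma states $\#\Ulam_n^2\leq\#\Ulam_n+\#\Ulam_{n+1}+\cdots+\#\Ulam_{2n}$: each ordered pair of \emph{distinct} $w_1,w_2\in\Ulam_n$ yields a word $w_1\concat w_2$ that is either Ulam of length $2n$ or admits a second factorization whose longer factor is an Ulam word of length in $[n+1,2n]$. Pigeonholing over the $n+1$ lengths in $[n,2n]$ gives $\max_{n\leq i\leq 2n}\#\Ulam_i\geq\#\Ulam_n^2/(n+1)$ --- a defect of $1/(n+1)$, not a constant. The growing defect is then absorbed by taking $n_{k+1}\in[n_k,2n_k]$ to be the maximizer and proving by induction the bound $\#\Ulam_{n_k}\geq 2C^{2^k}(n_k+1)\alpha^{n_k}$, whose explicit factor of $(n_k+1)$ is precisely what cancels the $1/(n_k+1)$ loss at each doubling step, via $4(n_k+1)>2(2n_k+1)\geq 2(n_{k+1}+1)$ and $\alpha^{2n_k}\geq\alpha^{n_{k+1}}$. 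In particular the $31$ in $\alpha_0$ is $30+1$, the size of the pigeonhole window starting from length $30$, not a concatenation defect. To salvage your route you would either have to prove your constant-defect inequality outright (a strictly stronger statement than anything in the paper) or weaken the target to the window-sum inequality above and let the induction hypothesis carry the linear factor, which is what the paper does.
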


We give proofs of both of these theorems in Section \ref{section: growth}. Unfortunately, both of these results are still quite far from what is conjectured to hold. To wit, define the density
    $$
    \rho(n) := \frac{\# \Ulam_n}{2^n};
    $$
Bade et al. conjectured that $\rho(n)\rightarrow r$ for some $0 < r < 1$ (see Conjecture 3.10 in \cite{BCLL_2020}); with our enlarged data set, we instead posit something a little stranger.

\begin{conjecture}\label{conjecture: growth rate}
$\rho(n)=\Theta(n^{-3/10})$.
\end{conjecture}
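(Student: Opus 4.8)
Conjecture~\ref{conjecture: growth rate} is wide open — Theorems~\ref{theorem: linear growth rate} and~\ref{theorem: exponential growth rate} do not even settle whether $\rho(n)\to 0$ — so what follows is a research program rather than an argument. The natural arena is analytic combinatorics. Let $U(x)=\sum_{n\geq 1}\#\Ulam_n\,x^n$ be the ordinary generating function, and for a binary word $w=w_1w_2\cdots w_n$ with $n\geq 2$ let $f(w)$ be the number of cut points $k\in\{1,\dots,n-1\}$ for which $w_1\cdots w_k$ and $w_{k+1}\cdots w_n$ are both Ulam and distinct, so that $w\in\Ulam$ precisely when $f(w)=1$. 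Writing $\rho_j:=\rho(j)$, the conjecture asserts $\#\Ulam_n=\Theta(2^n n^{-3/10})$, which forces $U$ to have radius of convergence $\tfrac12$ and to grow like $(1-2x)^{-7/10}$, up to bounded factors, as $x\to\tfrac12^-$; establishing that growth rate — together with the Tauberian step needed to transfer it back to the coefficients — would be the crux.

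The plan is to study the entire distribution of $f(w)$ for a uniformly random word $w$ of length $n$, not merely the event $\{f\geq 1\}$. For a fixed cut point $k$ the prefix and suffix are independent uniform words, so, using $\rho_1=1$ and discarding an exponentially small diagonal correction, $\mathbb{E}[f(w)]=\sum_{k=1}^{n-1}\rho_k\rho_{n-k}+O(2^{-n/2})$; under the conjectured decay $\rho_j\asymp j^{-3/10}$ this autocorrelation sum is $\asymp n^{2/5}$ (a convergent Beta integral, since $\tfrac{3}{10}<1$), so $\mathbb{E}[f(w)]\to\infty$, and $\{f(w)=1\}$ is a rare event whose precise rate of decay is what the conjecture quantifies. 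On the generating-function side one has the clean identity $\sum_{w}f(w)\,x^{|w|}=U(x)^2-U(x^2)$, hence $\sum_{|w|=n}f(w)=[x^n]\bigl(U(x)^2-U(x^2)\bigr)$, which near $x=\tfrac12$ is $\asymp(1-2x)^{-7/5}$ — polynomially larger than $\#\Ulam_n=[x^n]U(x)$. The discrepancy is carried entirely by words with $f(w)\geq 2$, so any correct analysis must control the full array $\#\{\,|w|=n:\,f(w)=k\,\}$ and show that an overwhelming fraction of its mass concentrates at $k\in\{0,1\}$ in a heavy, spread-out way rather than around the mean.

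The principal obstacle is that the obvious probabilistic heuristic is badly wrong and has no evident substitute. Were the $n-1$ cut events independent, Poissonization would give $\rho(n)=\mathbb{P}(f(w)=1)\asymp\lambda_n e^{-\lambda_n}$ with $\lambda_n:=\mathbb{E}[f(w)]$; since $\lambda_n\asymp n^{2/5}\to\infty$ under the conjecture, this would force $\rho(n)$ to decay faster than any polynomial — contradicting the conjecture itself, and also the computed data. The resolution must therefore be a strong \emph{anti}correlation between distinct cut points, which keeps $\mathbb{P}(f=1)$ only polynomially small even though $\mathbb{E}[f]\to\infty$; quantifying this decorrelation is, in our view, the heart of the matter. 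The plausible tools are a mixing or coupling estimate showing that Ulam-ness of a prefix affects a suffix only through a boundary of bounded length, a second-moment bound on $f$ fed into Paley--Zygmund to pin $\mathbb{P}(f\geq 1)$ from below, or an explicit near-bijection exhibiting the $n^{-3/10}$ factor outright. Compounding the difficulty, the condition $f(w)=1$ is a non-monotone cardinality constraint, so the Perron--Frobenius and positive-transfer-operator machinery that ordinarily governs ``at least one''-type recursions does not apply, and a genuinely new ingredient seems necessary; even the far weaker statements $\rho(n)\to 0$, or $\rho(n)\geq n^{-O(1)}$ along the full sequence, would already be significant. It is precisely our enlarged data set, which makes the decay of $\rho(n)$ visible, that singles out the exponent $\tfrac{3}{10}$, and any proof along these lines must reproduce exactly that value.
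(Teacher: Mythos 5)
You are right that this statement is a conjecture, not a theorem: the paper offers no proof of $\rho(n)=\Theta(n^{-3/10})$, only the numerical evidence of Figures \ref{fig:density_plot} and \ref{fig:raw count} and the reduction in Theorem \ref{theorem: tie between conjectures}, which rests on the elementary telescoping identity $\mu_g(n)=(2^n-3)/(k_n-1)$ and shows that the mean-gap form of Conjecture \ref{conjecture: gap distribution} implies this one. So presenting a program rather than an argument is the honest move, and your bookkeeping is sound where it is checkable: $\rho_1=1$; the first-moment identity $\mathbb{E}[f(w)]=\sum_{k=1}^{n-1}\rho_k\rho_{n-k}+O(2^{-n/2})$ (the diagonal correction being $\rho_{n/2}2^{-n/2}$ for even $n$); the generating-function identity $\sum_w f(w)x^{|w|}=U(x)^2-U(x^2)$, since $U(x^2)$ exactly removes the ordered pairs $(u,u)$; and the exponents $7/10$ and $2/5=1-2\cdot\tfrac{3}{10}$. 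The observation that the Poisson heuristic with $\lambda_n\asymp n^{2/5}$ would force super-polynomial decay of $\rho(n)$, contradicting the data, is a genuine and useful remark that the paper does not make.

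The one concrete error is the sign of the required correlation. Negative association among the cut events would concentrate $f$ near its mean $\asymp n^{2/5}$ (sub-Poissonian tails), making $\mathbb{P}(f=1)$ \emph{smaller} than the Poisson prediction, not larger; anticorrelation cannot rescue a polynomial rate. What the conjectured picture actually demands is strong \emph{positive} correlation, i.e.\ clustering: $\mathbb{P}(f=0)$ stays near $1$, the mean $n^{2/5}$ is carried by a vanishing fraction of words for which very many cut points succeed simultaneously, and $\mathbb{P}(f=1)$ survives at scale $n^{-3/10}$ in between. Your own description of the target distribution --- mass near $\{0,1\}$ together with a heavy tail --- already forces $\mathrm{Var}(f)\gg\mathbb{E}[f]$, which is overdispersion and hence positive covariance between distinct cuts; it is internally inconsistent with the word ``anticorrelation.'' This also undercuts the proposed use of Paley--Zygmund: with $\mathbb{E}[f^2]\gg(\mathbb{E}[f])^2$ that inequality gives only a weak lower bound on $\mathbb{P}(f\geq 1)$, and in any case it says nothing about the much more delicate event $\{f=1\}$, which is the non-monotone constraint that actually defines $\Ulam$. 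None of this damages the proposal as a statement of where the difficulty lies, but the mechanism you would need to quantify is clustering of representations, not their repulsion.
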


This conjecture is supported by the numerical evidence---see Figures \ref{fig:density_plot} and \ref{fig:raw count} for an example---but it also has ties to another conjecture involving the average gap between Ulam words, which we shall describe below. In any case, observe that if either conjecture is correct, the number of Ulam words grows only very slightly slower than $2^n$.

\begin{figure}[t]
    \centering
    \includegraphics[width=0.5\linewidth]{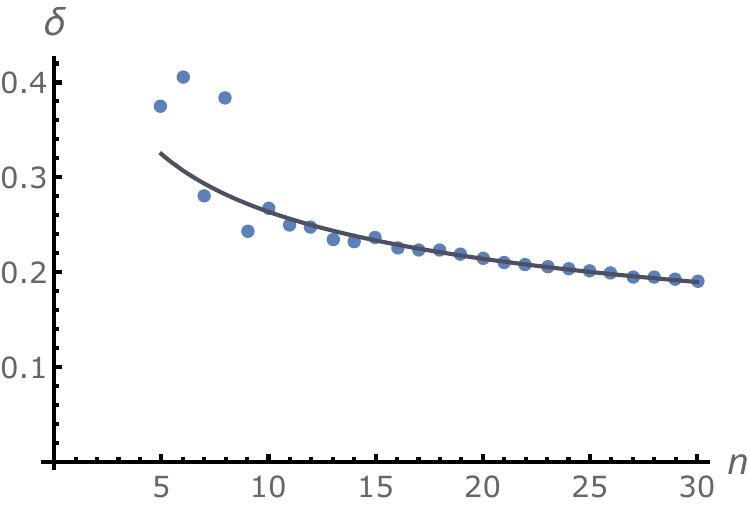}
    
    \caption{A plot of the densities $\rho(n)$ for $4 \leq n \leq 30$, together with a plot of $f(n) = 0.526 n^{-3/10}$.}
    \label{fig:density_plot}
\end{figure}

\begin{figure}
    \centering
    \begin{minipage}{0.3\textwidth}
    \begin{align*}
        \begin{array}{l|l}
        n & \Ulam_n \\ \hline
        13 & 1916 \\
        14 & 3812 \\ 
        15 & 7772 \\ 
        16 & 14822 \\ 
        17 & 29368 \\
        18 & 58478
    \end{array}
    \end{align*}
    \end{minipage}%
    \begin{minipage}{0.3\textwidth}
    \begin{align*}
        \begin{array}{l|l}
        n & \Ulam_n \\ \hline
        19 & 114300 \\ 
        20 & 225166 \\ 
        21 & 441724 \\ 
        22 & 876238 \\ 
        23 & 1717748 \\
        24 & 3406884
    \end{array}
    \end{align*}
    \end{minipage}%
    \begin{minipage}{0.3\textwidth}
    \begin{align*}
        \begin{array}{l|l}
         n & \Ulam_n \\ \hline
         25 & 6720784 \\ 
         26 & 13303332 \\ 
         27 & 26273948 \\ 
         28 & 52010642 \\ 
         29 & 102933200 \\ 
         30 & 203695342
    \end{array}
    \end{align*}
    \end{minipage}
    \caption{The exact counts for $\#\Ulam_n$ for $13 \leq n \leq 30$.}
    \label{fig:raw count}
\end{figure}

This notion of Ulam words was built on the earlier notion of Ulam sets due to Kravitz and Steinerberger\cite{kravitz_steinerberger_2017}, which was itself a generalization of Ulam's eponymous integer sequence, also defined recursively\cite{ulam_1964}: the (classical) Ulam sequence begins with $1,2$, and then every subsequent term is the next smallest integer that can be written as the sum of two distinct prior terms in exactly one way. Generalizations of Ulam's classic sequence have become an increasingly popular object of study: in 1972, Queneau did some preliminary work studying generalizations where the initial two terms of the integer sequence are varied\cite{queneau_1972}; in the 1990s, Cassaigne, Finch, Shmerl, and Spiegel determined some of the families of such sequences such that the consecutive differences are eventually periodic \cite{finch_1991,finch_1992_1,finch_1992_2,schmerl_spiegel_1994,cassaigne_finch_1995}; in 2017, Kravitz and Steinerberger considered generalizing the Ulam condition for abelian groups\cite{kravitz_steinerberger_2017}; in 2020, Bade et al. gave the aforementioned notion of Ulam words with some preliminary results\cite{BCLL_2020}; and in 2021, Sheydvasser showed that there is an analogous notion of Ulam sets for integer polynomials \cite{sheydvasser_2021} by building off earlier work of Hinman, Kuca, Schlesinger, and Sheydvasser\cite{HKSS_2019_1,HKSS_2019_2}.

Earlier work around Ulam words has largely centered around giving simple criteria for when words of some special type are Ulam---for example, Bade et al. showed that a word of the form $0^a 1 0^b$ is Ulam if and only if $\left(\begin{smallmatrix}a + b \\ a \end{smallmatrix}\right)$ is odd\cite{BCLL_2020}. Similarly, Mandelshtam considered Ulam words of the form $0^a 10^b 10^c$ and demonstrated a connection to the Sierpi\'nski gasket\cite{Mandelshtam_2022}. We also prove a few such results, such as the following.

\begin{theorem}\label{theorem: sierpinski}
Consider the set of points $(x,y) \in \Z_{\geq 1}^2$ such that $1^y 0^{x - y} \in \Ulam$. This is the discrete Sierpi\'nski triangle, union a point.
\end{theorem}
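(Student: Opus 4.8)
The plan is to analyze the splittings of $1^y0^{x-y}$ directly and to match the Ulam condition against Pascal's triangle modulo~$2$. Write the candidate word as $1^a0^b$ with $a=y\ge 1$ and $b=x-y$; if $b<0$ the word is not defined and there is nothing to prove, so assume $b\ge 0$. First I would dispose of the degenerate blocks. An easy induction on length shows that $1^a\in\Ulam$, and likewise $0^a\in\Ulam$, only for $a=1$: for $a\ge 2$ any splitting $1^a=1^k\concat 1^{a-k}$ would force $1^k,1^{a-k}\in\Ulam$, hence $k=a-k=1$ by induction, hence $u_1=u_2$, which is disallowed. In particular the only contributing point with $b=0$ is $(x,y)=(1,1)$, which will be the ``extra'' point.

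Next I would enumerate, for $a\ge 1$ and $b\ge 1$, all ways to write $1^a0^b=u_1\concat u_2$ with $u_1,u_2\in\Ulam$ and $u_1\ne u_2$. Every prefix of $1^a0^b$ has the form $1^p$ or $1^a0^q$; since $1^p\in\Ulam$ forces $p=1$ and $0^q\in\Ulam$ forces $q=1$, the only possible splittings are $1\concat 1^{a-1}0^b$ (when $a\ge 2$), $1^a0^{b-1}\concat 0$ (when $b\ge 2$), and $1\concat 0$ (only when $a=b=1$), and none of these ever has $u_1=u_2$. Writing $E(a,b)=1$ if $1^a0^b\in\Ulam$ and $E(a,b)=0$ otherwise, the number of valid splittings of $1^a0^b$ is thus $E(a-1,b)$ if $a\ge 2$, plus $E(a,b-1)$ if $b\ge 2$, plus $1$ if $a=b=1$; and $1^a0^b\in\Ulam$ precisely when this number equals~$1$. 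This gives $E(a,1)=E(1,b)=1$ for all $a,b\ge 1$, and for $a,b\ge 2$ the recursion $E(a,b)=E(a-1,b)\oplus E(a,b-1)$, because the count is $1$ iff exactly one of the two neighbours is Ulam.

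Finally I would recognise this as Pascal's triangle mod~$2$. Setting $P(a,b):=\binom{a+b-2}{a-1}\bmod 2$, one has $P(a,1)=\binom{a-1}{a-1}=1$, $P(1,b)=\binom{b-1}{0}=1$, and, for $a,b\ge 2$, $P(a,b)=P(a-1,b)\oplus P(a,b-1)$ by Pascal's rule reduced mod~$2$. Since $E$ and $P$ satisfy the same boundary conditions and the same recursion, induction on $a+b$ yields $E(a,b)=\binom{a+b-2}{a-1}\bmod 2$. Hence for $1\le y<x$ we have $1^y0^{x-y}\in\Ulam$ iff $\binom{x-2}{y-1}$ is odd, i.e.\ iff $(x-2,y-1)$ lies in $T:=\{(m,n):0\le n\le m,\ \binom{m}{n}\text{ odd}\}$, which by Lucas's theorem is precisely the discrete Sierpi\'nski triangle (with apex at the origin). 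The locus in the statement is therefore the image of $T$ under $(m,n)\mapsto(m+2,n+1)$ together with the point $(1,1)$, as claimed.

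The only genuine obstacle is the splitting bookkeeping in the middle paragraph: one must be sure that removing a single $1$ from the front or a single $0$ from the back are the \emph{only} ways to split $1^a0^b$ into two distinct Ulam words, and one must handle the boundary cases ($a$ or $b$ equal to~$1$, and the constraint $u_1\ne u_2$) without slippage. Once that is settled, the $\oplus$-recursion and its identification with Pascal's triangle mod~$2$ are routine.
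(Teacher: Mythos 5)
Your proposal is correct and follows essentially the same route as the paper: reduce to the two possible splittings $1\concat 1^{y-1}0^{x-y}$ and $1^{y}0^{x-y-1}\concat 0$ (using that $1^k,0^k\in\Ulam$ only for $k=1$), obtain the resulting XOR recursion, and identify it with Pascal's triangle modulo $2$, hence with the discrete Sierpi\'nski triangle. The only difference is cosmetic: you make the boundary cases and the closed form $\binom{x-2}{y-1}\bmod 2$ explicit, whereas the paper argues directly that the two iterative processes coincide after checking $x\le 2$.
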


We will discuss this construction more precisely in Section \ref{section: unconditional results}, but briefly, the discrete Sierpi\'nski triangle is an approximation to the standard Sierpi\'nski triangle. It can be constructed either iteratively (as in Figure \ref{fig: Making S}) or by coloring Pascal's triangle by parity.

    \begin{figure}
        \centering
    \begin{tikzpicture}[scale=0.5]
        \filldraw[thick, black] (0.6,0.6) rectangle (1.4,1.4);
        \node at (1,-0.5) {$S_0$};
    \end{tikzpicture}
    \hspace{1cm}
    \begin{tikzpicture}[scale=0.5]
        \draw[thick, black] (0.6,0.6) rectangle (1.4,1.4);
        \filldraw[thick, black] (1.6,0.6) rectangle (2.4,1.4);
        \filldraw[thick, black] (1.6,1.6) rectangle (2.4,2.4);
        \node at (1.5,-0.5) {$S_1$};
    \end{tikzpicture}
    \hspace{1cm}
    \begin{tikzpicture}[scale=0.5]
        \draw[thick, black] (0.6,0.6) rectangle (1.4,1.4);
        \draw[thick, black] (1.6,0.6) rectangle (2.4,1.4);
        \draw[thick, black] (1.6,1.6) rectangle (2.4,2.4);
        \foreach\x in {0,2}{
            \filldraw[thick, black] (2.6,0.6+\x) rectangle (3.4,1.4+\x);
            \filldraw[thick, black] (3.6,0.6+\x) rectangle (4.4,1.4+\x);
            \filldraw[thick, black] (3.6,1.6+\x) rectangle (4.4,2.4+\x);
        }
        \node at (2.5,-0.5) {$S_2$};
    \end{tikzpicture}
    \hspace{1cm}
    \begin{tikzpicture}[scale=0.5]
        \draw[thick, black] (0.6,0.6) rectangle (1.4,1.4);
        \draw[thick, black] (1.6,0.6) rectangle (2.4,1.4);
        \draw[thick, black] (1.6,1.6) rectangle (2.4,2.4);
        \draw[thick, black] (2.6,0.6) rectangle (3.4,1.4);
        \draw[thick, black] (3.6,0.6) rectangle (4.4,1.4);
        \draw[thick, black] (3.6,1.6) rectangle (4.4,2.4);
        \draw[thick, black] (2.6,2.6) rectangle (3.4,3.4);
        \draw[thick, black] (3.6,2.6) rectangle (4.4,3.4);
        \draw[thick, black] (3.6,3.6) rectangle (4.4,4.4);
        \foreach\x in {0,4}{
            \filldraw[thick, black] (4.6,0.6+\x) rectangle (5.4,1.4+\x);
            \filldraw[thick, black] (5.6,0.6+\x) rectangle (6.4,1.4+\x);
            \filldraw[thick, black] (5.6,1.6+\x) rectangle (6.4,2.4+\x);
            \filldraw[thick, black] (6.6,0.6+\x) rectangle (7.4,1.4+\x);
            \filldraw[thick, black] (7.6,0.6+\x) rectangle (8.4,1.4+\x);
            \filldraw[thick, black] (7.6,1.6+\x) rectangle (8.4,2.4+\x);
            \filldraw[thick, black] (6.6,2.6+\x) rectangle (7.4,3.4+\x);
            \filldraw[thick, black] (7.6,2.6+\x) rectangle (8.4,3.4+\x);
            \filldraw[thick, black] (7.6,3.6+\x) rectangle (8.4,4.4+\x);
        }
        \node at (4.5,-0.5) {$S_3$};
    \end{tikzpicture}
    \caption{Visual of the first 4 steps of constructing the discrete Sierpi\'nski triangle.}
    \label{fig: Making S}
    \end{figure}
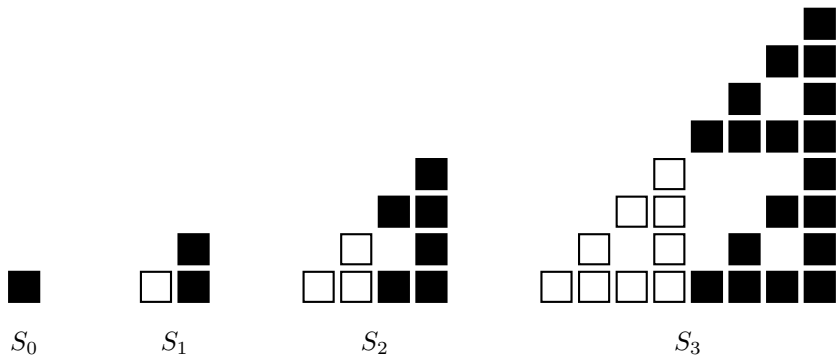

On the other hand, we also have a novel way of considering Ulam words by interpreting them as integers. Observe that there exists a natural map $\pi:S[\{0,1\}] \rightarrow \Z_{\geq 0}$ via interpreting a word as the binary representation of an integer. In general, this map is not injective---for example, $\pi(0) = \pi(00) = \pi(000) = 0$. However, if we restrict it to words of a fixed length, then it is. In particular, the restrictions $\pi:\Ulam_n \rightarrow \Z \cap [0,2^{n-1}]$ are injective maps. This gives a natural ordering on $\Ulam_n$ and allows us to ask questions about how Ulam words are distributed. For example, we might ask about the distribution of the gaps---differences between consecutive Ulam words, interpreted as integers.

\begin{conjecture}\label{conjecture: gap distribution}
Let $u_1 < u_2 < \ldots < u_{k_n}$ be the (ordered) elements of $\pi(\Ulam_n)$. Define
    \begin{align*}
    p_n: \Z_{\geq 1} &\rightarrow [0,\infty) \\
    g &\mapsto \frac{\# \left\{i\middle|u_{i + 1} - u_i = g\right\}}{k_n - 1}.
    \end{align*}
This has a natural interpretation as a probability measure. As $n \rightarrow \infty$, the functions $p_n$ converge pointwise to a probability measure $p:\Z_{\geq 1} \rightarrow [0,\infty)$. Furthermore, let $\mu_g(n)$ be the mean of the probability measure $p_n$. Then $\mu_g(n)=\Theta(n^{3/10})$---indeed, it may be that there is a constant $c\approx 1.9$ such that $\mu_g(n) = cn^{3/10} + o(1)$.
\end{conjecture}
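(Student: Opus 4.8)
Conjecture~\ref{conjecture: gap distribution} has two halves of very different character, which we treat separately. The claim about the mean $\mu_g(n)$ is essentially equivalent to Conjecture~\ref{conjecture: growth rate}, because it telescopes. Write $A_n := \pi(\Ulam_n)$; since $\mu_g(n)$ is the average of $u_{i+1}-u_i$ over $1 \le i \le k_n-1$, we have
$$
\mu_g(n) = \frac{u_{k_n}-u_1}{k_n-1} = \frac{\max A_n - \min A_n}{\#\Ulam_n - 1}.
$$
Now for $n \ge 2$ the word $0^{n-1}1$ is Ulam --- this is the case $a=n-1$, $b=0$ of the Bade et al.\ criterion for $0^a10^b$, since $\binom{n-1}{n-1}=1$ is odd --- while $0^n$ is not, so $\min A_n = 1$; and a short induction shows that complementation $0 \leftrightarrow 1$ preserves Ulam-ness, so $1^{n-1}0$ is Ulam and $1^n$ is not, whence $\max A_n = 2^n-2$. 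Thus $\mu_g(n) = (2^n-3)/(\#\Ulam_n-1)$ exactly. From this identity, $\mu_g(n) = \Theta(n^{3/10})$ holds if and only if $\#\Ulam_n = \Theta(2^n n^{-3/10})$, that is, if and only if $\rho(n) = \Theta(n^{-3/10})$; and under the sharper form $\rho(n) = r\,n^{-3/10} + o(n^{-3/5})$ (a strengthening of Conjecture~\ref{conjecture: growth rate} consistent with Figure~\ref{fig:density_plot}) one obtains $\mu_g(n) = \tfrac1r n^{3/10} + o(1)$, the constant $\tfrac1r \approx 1/0.526 \approx 1.9$ matching the conjectured $c$.

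The convergence $p_n \to p$ is the substantive part. Here we would first pass from gaps to local configurations: for each $g$ one has $p_n(g) = \tfrac{\#\Ulam_n}{k_n-1}\,d_n(g)$, where $d_n(g) := \#\{m : A_n\cap[m,m+g] = \{m,m+g\}\}/\#\Ulam_n$ is the frequency of the pattern ``$m$ and $m+g$ in $A_n$, nothing between'', and since $\#\Ulam_n/(k_n-1)\to 1$ it suffices that each $d_n(g)$ converge --- more robustly, that every bounded-window frequency (the joint Ulam-ness of $m,m+1,\dots,m+L$ over a uniformly random $m$) stabilizes as $n\to\infty$. The natural engine for this would be a transfer operator: a length-$(n+1)$ word is $0w$ or $1w$ with $w$ of length $n$, and one wants the Ulam-ness of a window at level $n+1$ to be a finite-state function --- up to a controllable error --- of windows at level $n$, the Sierpi\'nski-type structure of Theorem~\ref{theorem: sierpinski} (and of the $0^a10^b$ criterion) serving as the prototype. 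A spectral gap for such an operator would produce the limits $d(g)$, hence a candidate measure $p$; and that $p$ can be a genuine probability measure while $\mu_g(n)\to\infty$ is no contradiction, since the excess mean escapes to $g=\infty$ through the large inter-cluster gaps (of size $\sim\mu_g(n)$), exactly as for measures such as $(1-n^{-1/2})\delta_1 + n^{-1/2}\delta_n$.

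The hard part --- and the reason this is stated only as a conjecture --- is exactly the ``finite-state up to controlled error'' step. The defining recursion, unique decomposition $w = u_1\concat u_2$ into Ulam words, does not visibly localize: the status of $w$ depends on its prefixes and suffixes of all lengths, and at present there is no structural description of $\Ulam$ strong enough even to decide whether $\#\Ulam_n$ grows exponentially (Theorems~\ref{theorem: linear growth rate} and~\ref{theorem: exponential growth rate} being the best bounds available). A genuine proof would therefore need a new near-locality theorem for Ulam-ness --- a decay of correlations between the Ulam-ness of nearby integers of a common length, or an automatic description of the low-order bits --- after which the counting, the identification of the limit $p$, and the tie back to Conjecture~\ref{conjecture: growth rate} should be comparatively routine. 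Lacking such input, the statement rests on the numerical evidence of Figures~\ref{fig:density_plot} and~\ref{fig:raw count}.
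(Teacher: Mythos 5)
This statement is a conjecture in the paper: no proof is given, only the numerical evidence of Figures \ref{fig:average gap}, \ref{fig:relative error}, and \ref{fig:gap_distributions}, so there is no proof to compare yours against, and you correctly do not claim to supply one. The substantive part of your write-up --- the telescoping identity $\mu_g(n) = (u_{k_n}-u_1)/(k_n-1) = (2^n-3)/(k_n-1)$ with $\min \pi(\Ulam_n)=1$ and $\max \pi(\Ulam_n)=2^n-2$, giving the unconditional equivalence $\mu_g(n) \asymp \rho(n)^{-1}$ --- is exactly the paper's proof of Theorem \ref{theorem: tie between conjectures}, and your identification of the pointwise convergence of $p_n$ as the genuinely open part (with the transfer-operator sketch offered only as a roadmap, correctly flagged as blocked by the absence of any locality property for Ulam-ness) is an accurate assessment of the state of the problem.
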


This conjecture is well-supported by our available data---see Section \ref{section: archimedean equidistribution} for details, illustrations, and further odd properties of the apparent distribution. What is interesting about this statement about average gaps is that, if true, it immediately implies Conjecture \ref{conjecture: growth rate}.

\begin{theorem}\label{theorem: tie between conjectures}
As $n \rightarrow \infty$, $\rho(n)^{-1} \asymp \mu_g(n)$. Consequently, Conjecture \ref{conjecture: gap distribution} implies Conjecture \ref{conjecture: growth rate}.
\end{theorem}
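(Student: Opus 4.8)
The plan is to show that $\mu_g(n)$ and $\rho(n)^{-1}$ are each, up to absolute constants, equal to $2^n/\#\Ulam_n$. Write $u_1 < u_2 < \cdots < u_{k_n}$ for the ordered elements of $\pi(\Ulam_n)$; since $\pi$ is injective on $\Ulam_n$, $k_n = \#\Ulam_n$. The first step is the telescoping identity
$$
\mu_g(n) \;=\; \sum_{g\geq 1} g\,p_n(g) \;=\; \frac{1}{k_n-1}\sum_{i=1}^{k_n-1}\bigl(u_{i+1}-u_i\bigr) \;=\; \frac{u_{k_n}-u_1}{k_n-1},
$$
so that the mean gap equals the spread $u_{k_n}-u_1$ of $\pi(\Ulam_n)$ divided by $k_n-1$. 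Since $k_n \geq 2n+4$ for $n\geq 6$ by Theorem~\ref{theorem: linear growth rate} (and $k_n\geq 2$ for every $n$), we have $k_n-1 \asymp k_n = \#\Ulam_n$, so the entire statement reduces to showing $u_{k_n}-u_1 = \Theta(2^n)$.

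The upper bound is free: every length-$n$ word lands under $\pi$ in $\Z \cap [0,2^n-1]$, so $u_{k_n}-u_1 < 2^n$. For the lower bound I would exhibit Ulam words of length $n$ within $O(1)$ of each endpoint of this interval. By the criterion of Bade et al.\ \cite{BCLL_2020} recalled above, $0^{n-1}1 \in \Ulam_n$, since the relevant binomial coefficient is $\binom{n-1}{n-1}=1$; and $\pi(0^{n-1}1)=1$, so $u_1 \leq 1$. Next, the letterwise swap $0 \leftrightarrow 1$ is an automorphism of the free semigroup $S[\{0,1\}]$ fixing $\{0,1\}$ and sending a factorization of a word into two distinct Ulam words to the corresponding factorization of its image; an induction on word length then shows it preserves $\Ulam$. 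Applying it to $0^{n-1}1$ gives $1^{n-1}0 \in \Ulam_n$ with $\pi(1^{n-1}0)=2^n-2$, so $u_{k_n}\geq 2^n-2$. Hence $u_{k_n}-u_1 \geq 2^n-3$, and with the upper bound, $u_{k_n}-u_1 = \Theta(2^n)$. Substituting into the telescoping identity yields $\mu_g(n) \asymp 2^n/\#\Ulam_n = \rho(n)^{-1}$, the first assertion of the theorem. The second is then immediate: if Conjecture~\ref{conjecture: gap distribution} holds, then $\mu_g(n)=\Theta(n^{3/10})$, hence $\rho(n)^{-1}=\Theta(n^{3/10})$, i.e.\ $\rho(n)=\Theta(n^{-3/10})$, which is Conjecture~\ref{conjecture: growth rate}.

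I do not expect a substantive obstacle. The only step needing genuine (if minor) care is the lower bound on the spread: one must rule out the possibility that $\pi(\Ulam_n)$ is confined to a sub-interval of length $o(2^n)$, which is precisely what the two explicit extremal words accomplish once one verifies the one-line letter-swap symmetry of $\Ulam$ and checks the input to the Bade et al.\ criterion. It is worth noting that this argument produces only the relation $\asymp$, not an asymptotic equality, so it does not by itself determine the constant $c\approx 1.9$ in the sharper form $\mu_g(n)=cn^{3/10}+o(1)$ of Conjecture~\ref{conjecture: gap distribution}; establishing that would additionally require controlling $u_{k_n}-u_1$ to leading order, which the present approach does not supply.
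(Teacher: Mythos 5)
Your proposal is correct and follows essentially the same route as the paper: both arguments telescope the gaps to get $\mu_g(n) = (u_{k_n}-u_1)/(k_n-1)$ and then pin down the endpoints via the words $0^{n-1}1$ and $1^{n-1}0$ (the paper computes $u_1=1$ and $u_{k_n}=2^n-2$ exactly, citing Theorem~\ref{theorem: bade 4} and closure under complements, so $\mu_g(n)=(2^n-3)/(k_n-1)$). Your explicit remark that the argument yields only $\asymp$ and not the constant $c\approx 1.9$ is a fair and accurate observation, but the content of the proof is the same.
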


This is salient, since our numerical evidence for Conjecture \ref{conjecture: gap distribution} is arguably much stronger than for Conjecture \ref{conjecture: growth rate}! Again, see Section \ref{section: archimedean equidistribution} for details. Finally, in Section \ref{section: non-archimedean equidistribution}, we ask the question of how $\pi(\Ulam_n)$ is distributed modulo $N$.

\begin{conjecture}\label{conjecture: nonarchimedean equidistribution}
For any integer $N > 1$ and $a \in \Z/N\Z$, define the \emph{relative density}
    $$
    \rho_{a,N}(n) := \frac{\# \left\{w \in \Ulam_n\middle|\pi(w) = a \mod N\right\}}{\# \Ulam_n}.
    $$
Then $\lim_{n\rightarrow\infty}\rho_{a,N}(n) = 1/N$.
\end{conjecture}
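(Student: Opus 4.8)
\section*{A proof strategy for Conjecture~\ref{conjecture: nonarchimedean equidistribution}}

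The natural first move is to pass to Fourier analysis on the finite cyclic group $\Z/N\Z$. For a nontrivial character $\chi$ of $\Z/N\Z$, set
\[
S_n(\chi) \;:=\; \sum_{w \in \Ulam_n} \chi\bigl(\pi(w)\bigr).
\]
By orthogonality of characters, $\rho_{a,N}(n) = \tfrac1N + \tfrac{1}{N\,\#\Ulam_n}\sum_{\chi \neq 1}\overline{\chi(a)}\,S_n(\chi)$, so Conjecture~\ref{conjecture: nonarchimedean equidistribution} is \emph{equivalent} to the assertion that $S_n(\chi) = o(\#\Ulam_n)$ as $n \to \infty$, for every modulus $N$ and every nontrivial $\chi$. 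The whole problem is thus one of producing cancellation in these twisted counts. (There is no cheap reduction to prime-power $N$, since a character of $\Z/N\Z$ need not factor through a single prime power, but the arguments below are insensitive to that.)

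One piece comes for free. It is a routine induction on word length that bitwise complementation $w \mapsto \bar w$ is a length-preserving involution of $\Ulam$ (a factorization $w = u \concat v$ with $u \neq v$ corresponds to the factorization $\bar w = \bar u \concat \bar v$ with $\bar u \neq \bar v$), as is reversal of the bit string. Since $\pi(\bar w) = (2^{\,|w|}-1) - \pi(w)$, complementation forces
\[
\rho_{a,N}(n) \;=\; \rho_{\,(2^n - 1 - a)\bmod N,\; N}(n) \qquad\text{for all } n, a,
\]
and in particular $\rho_{0,2}(n) = \rho_{1,2}(n) = \tfrac12$ \emph{exactly} for every $n \geq 1$; reversal supplies a further, $n$-dependent, identity among the $\rho_{a,N}$. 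These symmetries settle $N = 2$ and constrain the distribution modulo $4$ and $8$, but they fall far short of equidistribution among all $N$ residue classes, and a genuinely new input is needed for everything else.

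For the remaining characters the plan is to exploit the recursive definition of $\Ulam$ through a $\chi$-twisted transfer operator. Write $r_n(w)$ for the number of ordered factorizations $w = u \concat v$ with $u,v \in \Ulam$ and $u \neq v$, so that $\Ulam_n = \{\,w \in \{0,1\}^n : r_n(w) = 1\,\}$ for $n \geq 2$. For $j \geq 0$ let $\chi_j$ denote the character $a \mapsto \chi(2^j a)$, and for any character $\psi$ put $S_m(\psi) = \sum_{u \in \Ulam_m}\psi(\pi(u))$. Splitting a factorization according to the length $k$ of its first factor and using $\pi(u \concat v) = 2^{\,|v|}\pi(u) + \pi(v)$ gives the bilinear identity
\[
\sum_{w \in \{0,1\}^n} r_n(w)\,\chi\bigl(\pi(w)\bigr) \;=\; \sum_{k=1}^{n-1} S_k(\chi_{\,n-k})\,S_{n-k}(\chi) \;-\; E_n(\chi),
\]
where $E_n(\chi)$ is the diagonal correction from the term $k = n-k$ (and is $0$ for $n$ odd). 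The left-hand side equals $S_n(\chi)$ plus the contribution $\sum_{r_n(w)\geq 2} r_n(w)\chi(\pi(w))$ of the \emph{over-represented} words. Taking $\chi$ trivial recovers the self-convolution recursion for $\#\Ulam_n$ itself; the strategy is to show that the twisted recursion contracts strictly faster than the untwisted one, forcing $|S_n(\chi)|$ to be exponentially smaller than $\#\Ulam_n$.

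The main obstacle is controlling the over-represented words. The identity above is only useful once one knows that $\sum_{r_n(w)\geq 2} r_n(w)$ is of strictly smaller order than $\#\Ulam_n$ and, more delicately, that its $\chi$-twist also cancels; this is essentially the same difficulty that prevents us from pinning down the true growth rate of $\#\Ulam_n$, since Theorem~\ref{theorem: exponential growth rate} only produces a fast-growing subsequence and no matching upper bound. A clean spectral-gap argument therefore seems out of reach with present techniques: we expect that any unconditional proof of Conjecture~\ref{conjecture: nonarchimedean equidistribution} will have to be bundled with substantially sharper structural information about $\Ulam_n$ than is currently available, while conversely a sufficiently strong quantitative form of the recursive self-similarity observed numerically in Section~\ref{section: non-archimedean equidistribution} would yield the conjecture through the mechanism just described.
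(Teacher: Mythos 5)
You have not proven Conjecture~\ref{conjecture: nonarchimedean equidistribution}, and neither does the paper: the statement is left open there, supported only by numerical evidence (the decay of the modular discrepancy in Section~\ref{section: non-archimedean equidistribution}) and by the exact complementation symmetries, so there is no proof to compare yours against. The portions of your write-up that are actually established are correct and essentially reproduce the paper's partial results: the orthogonality reduction to the cancellation estimate $S_n(\chi) = o(\#\Ulam_n)$ for nontrivial $\chi$, the symmetry $\rho_{a,N}(n) = \rho_{(2^n-1-a) \bmod N,\,N}(n)$ (your exponent $2^n$ is the correct one for length-$n$ words; the paper's $2^{n+1}$ is an indexing slip), and the exact identity $\rho_{0,2}(n)=\rho_{1,2}(n)=\tfrac12$. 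Your parenthetical about the Chinese remainder theorem is also a legitimate correction: a nontrivial character of $\Z/N\Z$ can have composite conductor, and uniformity of the marginals modulo each prime power does not imply joint uniformity modulo $N$, contrary to the reduction asserted in passing in Section~\ref{section: non-archimedean equidistribution}.

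The genuine gap is that your transfer-operator mechanism never produces any cancellation. The bilinear identity relating $\sum_w r_n(w)\chi(\pi(w))$ to $\sum_{k} S_k(\chi_{n-k}) S_{n-k}(\chi)$ is correct, but it determines $S_n(\chi)$ only up to the term $\sum_{r_n(w)\ge 2} r_n(w)\chi(\pi(w))$ coming from over-represented words, and neither you nor the paper can bound that term --- not even its absolute value, let alone exhibit cancellation in its $\chi$-twist. The untwisted case $\chi=1$ of the same identity is the self-convolution count underlying Lemma~\ref{lemma: weak bound}, and there it yields only a one-sided inequality precisely because the over-represented contribution must be discarded rather than controlled; correspondingly the paper has no nontrivial upper bound on $\#\Ulam_n$, and its lower bounds are linear in general and exponential with base $\approx 1.649$ only along a sparse subsequence (Theorem~\ref{theorem: exponential growth rate}). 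Without both an upper bound of strictly smaller order on $\sum_{r_n(w)\ge 2} r_n(w)$ and a lower bound on $\#\Ulam_n$ of order $2^{n(1-o(1))}$, the proposed contraction cannot even be initialized. Your text should therefore be presented as a reduction of the conjecture to two harder open problems --- which you do acknowledge in your final paragraph --- and not as a proof.
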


\begin{remark}
As we discuss in Section \ref{section: non-archimedean equidistribution}, while this conjecture is consistent with the available data, it is somewhat surprising. For one thing, $\rho_{5,6}(1) = \rho_{5,6}(2) = \rho_{5,6}(3) = 0$, and it takes some time before it appears to start to converge to $1/6$. For another, there is an apparent bias modulo $6$ in the distribution of the gaps.
\end{remark}

Our code and some of our data can be found on GitHub\footnote{https://github.com/asheydva/Ulam-Words.git}, but it is far from efficient---as was pointed out to us Tom\'as Oliveira e Silva, it is possible to use bitmaps to make these computations much faster; a good implementation should give a $O(2^n \log(n))$ running time. However, we leave this as material for future work.

\section{Definitions and Visualizations}\label{section: definitions}

We start with some basic definitions and constructions. Given a word $w \in S[\{0,1\}]$, we define its \emph{complement} $\hat{w}$ to be the word with every instance of $0$ replaced with a $1$, and vice versa. We also define the \emph{reverse} $\overline{w}$, which is the word obtained by reversing the order of the letters. It was shown by Bade et al. \cite{BCLL_2020} that $w \in \Ulam$ if and only if $\hat{w} \in \Ulam$, if and only if $\overline{w} \in \Ulam$.

To better visualize the set $\Ulam$, we made use of heat maps, which depict each Ulam word as a colored bar and stacks all of the words vertically---that is, for a given word, a $0$ corresponds to a rectangle of one color, and a $1$ corresponds to a rectangle of a second color. An example is provided in Figure \ref{fig:heat_map}. In general, we abridge such diagrams: we created figures only using all the Ulam words that started with zero, since Ulam words are closed under complements. Moreover, we impose the ordering discussed in the introduction, defining $w \leq w'$ if and only if $\pi(w) \leq \pi(w')$. Using this way of visualizing Ulam words allows us to easily see that there is both a clear binary tree structure that governs the existence of Ulam words, as well as a chaotic element to the set where the binary tree breaks down.

\begin{figure}
    \centering
    \includegraphics[width=0.8\linewidth]{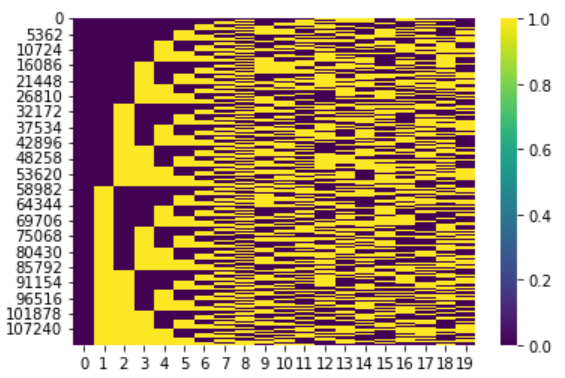}
    \caption{All words of length $20$ beginning with a zero.}
    \label{fig:heat_map}
\end{figure}

We can be more specific about our meaning regarding this breakdown: since Ulam words are preserved under the reverse map, this is equivalent to saying that for any $n$ there exists $n > \ell_n > 0$ such that all possible subwords of length $\ell_n$ occur as the final $\ell_n$ characters of words in $\Ulam_n$. In turn, that is equivalent to saying that the quotient map $\pi(\Ulam_n) \rightarrow \Z/2^{\ell_n}\Z$ is surjective. Our observation is that $\ell_n$ appears to increase as a function of $n$, albeit not very quickly---see Figure \ref{fig:surjectivity_data}. Assuming that Conjecture \ref{conjecture: nonarchimedean equidistribution} is true, then it would follow immediately that $\ell_n \rightarrow \infty$ simply by considering the case where $N = 2^{\ell_n}$---indeed, the heat maps were the original impetus for our equidistribution conjectures. On the other hand, the ``chaotic" latter half of the heat map is more of a mystery.

\begin{figure}[t]
    \begin{center}
    \begin{minipage}[t]{0.15\textwidth}
    \begin{align*}
        \begin{array}{c|c}
            n & \ell_n \\ \hline
            1 & 1 \\
            2 & 1 \\
            3 & 1 \\
            4 & 1 \\
            5 & 3 \\
            6 & 2
        \end{array}
    \end{align*}
    \end{minipage}%
    \begin{minipage}[t]{0.15\textwidth}
    \begin{align*}
        \begin{array}{c|c}
            n & \ell_n \\ \hline
            7 & 4 \\
            8 & 4 \\
            9 & 4 \\
            10 & 4 \\
            11 & 4 \\
            12 & 5
        \end{array}
    \end{align*}
    \end{minipage}%
    \begin{minipage}[t]{0.15\textwidth}
    \begin{align*}
        \begin{array}{c|c}
            n & \ell_n \\ \hline
            13 & 5 \\
            14 & 5 \\
            15 & 6 \\
            16 & 7 \\
            17 & 7 \\
            18 & 8
        \end{array}
    \end{align*}
    \end{minipage}%
    \begin{minipage}[t]{0.15\textwidth}
    \begin{align*}
        \begin{array}{c|c}
            n & \ell_n \\ \hline
            19 & 9 \\
            20 & 9 \\
            21 & 9 \\
            22 & 10 \\
            23 & 10 \\
            24 & 11
        \end{array}
    \end{align*}
    \end{minipage}%
    \begin{minipage}[t]{0.15\textwidth}
    \begin{align*}
        \begin{array}{c|c}
            n & \ell_n \\ \hline
            25 & 11 \\
            26 & 11 \\
            27 & 12 \\
            28 & 12 \\
            29 & 13 \\
            30 & 13
        \end{array}
    \end{align*}
    \end{minipage}
\end{center}
    \caption{Tables of $n$ versus $\ell_n$, where $\ell_n$ is the largest integer such that $\Ulam_n \rightarrow \Z/2^{\ell_n}\Z$ is surjective.}
    \label{fig:surjectivity_data}
\end{figure}

\section{Lower Bounds on Growth}\label{section: growth}

Our goal in this section is to prove our lower bounds on $\#\Ulam_n$; we begin with Theorem \ref{theorem: linear growth rate}, for which we need some explicit examples of Ulam words. The first three are due to Bade et al.

\begin{theorem}[Corollary 3.4 in \cite{BCLL_2020}]\label{theorem: bade 4}
There are $G(n-1)$ Ulam words of length $n$ of the form $0^a10^b$, where $G(n)$ is the $n$-th entry in Gould's sequence.
\end{theorem}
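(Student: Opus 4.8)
The plan is to convert the statement into a counting problem and then recognize the answer as Gould's sequence, using the criterion of Bade et al.\ recalled in Section~\ref{section: introduction}: a word $0^a 1 0^b$ is Ulam if and only if $\binom{a+b}{a}$ is odd.

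First I would parametrize the relevant words. A word of the form $0^a 1 0^b$ has length $a + b + 1$, so those of length $n$ are exactly $w_a := 0^a 1 0^{\,n-1-a}$ for $a = 0, 1, \ldots, n-1$; there are $n$ of them, and they are pairwise distinct because the single $1$ occupies a different position for each $a$. By the quoted criterion, $w_a \in \Ulam$ precisely when $\binom{(n-1-a)+a}{a} = \binom{n-1}{a}$ is odd. Hence
\[
\#\bigl\{\, w \in \Ulam_n : w = 0^a 1 0^b \text{ for some } a, b \ge 0 \,\bigr\} = \#\bigl\{\, a : 0 \le a \le n-1,\ \binom{n-1}{a} \text{ odd} \,\bigr\},
\]
which is the number of odd entries in row $n-1$ of Pascal's triangle; by definition this is $G(n-1)$, the corresponding term of Gould's sequence. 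If one wishes, Lucas' theorem then gives the closed form $G(m) = 2^{s(m)}$, where $s(m)$ is the number of $1$'s in the binary expansion of $m$, but this is not needed for the statement.

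Since the substantive input—the parity criterion for $0^a 1 0^b$—is already available, the only work left is bookkeeping: being careful with the boundary cases $a = 0$ and $b = 0$ and with the indexing convention for Gould's sequence. For a self-contained argument one would instead reprove the criterion itself, which is where the (mild) difficulty lies. Because $0^a 1 0^b$ contains exactly one $1$, any factorization into two nonempty words pairs a power of $0$ with a word of the form $0^c 1 0^d$, and among powers of $0$ only $0$ is Ulam; this forces the only candidate Ulam factorizations to be $0 \concat 0^{a-1} 1 0^b$ and $0^a 1 0^{b-1} \concat 0$, so for $a, b \ge 1$ the word $0^a 1 0^b$ is Ulam iff exactly one of $0^{a-1} 1 0^b$, $0^a 1 0^{b-1}$ is. Together with the boundary values $1 0^b, 0^a 1 \in \Ulam$, this is exactly Pascal's recurrence modulo $2$, and an induction on $a+b$ identifies the indicator of ``$0^a 1 0^b \in \Ulam$'' with $\binom{a+b}{a} \bmod 2$, completing the proof in either presentation.
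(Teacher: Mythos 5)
Your proof is correct. The paper itself imports this statement from Bade et al.\ without proof, but your derivation---counting the $n$ words $0^a 1 0^{n-1-a}$ and applying the parity criterion of Theorem~\ref{theorem: bade 1} to identify the count with the number of odd entries in row $n-1$ of Pascal's triangle---is exactly the intended route, and your sketch of the criterion itself (only two candidate factorizations, since $0^k \notin \Ulam$ for $k \geq 2$, giving Pascal's recurrence mod $2$) mirrors the argument the paper uses for Theorem~\ref{theorem: sierpinski} and Corollary~\ref{corollary: Gould count}.
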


\begin{remark}
Gould's sequence $G(n)$ is the number of odd entries in the $n$-th row of Pascal's triangle; equivalently, $G(n)=2^{\#_1(n)}$, where $\#_1(n)$ is the number of non-zero bits in the binary representation of $n$.
\end{remark}

\begin{remark}
Since $w \in \Ulam$ if and only if $\overline{w} \in \Ulam$, if and only if $\hat{w} \in \Ulam$, we get analogous results with $1$'s replaced with $0$'s and the order of the letters reversed. This is true for all the results that we prove here.
\end{remark}

\begin{theorem}[Theorem 3.5 in \cite{BCLL_2020}]\label{theorem: bade 2}
For any $a,b \in \Z_{\geq 0}$, $0^a 1^2 0^b \in \Ulam$ if and only if the length of the word is odd (that is, $a + b = 1 \pmod 2$).
\end{theorem}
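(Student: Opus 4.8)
The plan is to prove the equivalence by strong induction on the length $L = a+b+2$ of the word $w = 0^a 1^2 0^b$. The base case $L = 2$ is immediate: then $a = b = 0$ and $w = 11$, whose only splitting $11 = 1 \concat 1$ uses equal halves, so $11 \notin \Ulam$, in agreement with $a+b = 0$ being even. For the inductive step I would first record two auxiliary facts. First, $0^k \notin \Ulam$ for every $k \geq 2$: a short induction, since the only Ulam proper prefix of $0^k$ is $0$, while the split $0 \concat 0^{k-1}$ is forbidden when $k = 2$ and has a non-Ulam second half when $k \geq 3$. Second, $0^a 1 \in \Ulam$ and $1\,0^b \in \Ulam$ for all $a, b \geq 0$; this is the Bade et al. criterion recalled in Section~\ref{section: introduction}---a word $0^{a'} 1 0^{b'}$ is Ulam if and only if $\binom{a'+b'}{a'}$ is odd---applied with $b' = 0$ and with $a' = 0$, since $\binom{a}{a} = \binom{b}{0} = 1$.

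Next I would enumerate all Ulam decompositions $w = u_1 \concat u_2$ by running over the proper prefixes $u_1$ of $w$ and checking when both $u_1$ and the complementary suffix $u_2$ are Ulam. Among the prefixes $0^k$ with $1 \leq k \leq a$ only $0$ is Ulam, pairing with $u_2 = 0^{a-1} 1^2 0^b$; the prefix $0^a 1$ is Ulam and pairs with the always-Ulam word $u_2 = 1\,0^b$; and for every remaining prefix $0^a 1^2 0^j$ with $0 \leq j \leq b-1$ the suffix $u_2 = 0^{b-j}$ is Ulam only when $j = b-1$, giving $u_1 = 0^a 1^2 0^{b-1}$ and $u_2 = 0$. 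Hence the only candidate Ulam decompositions are
\begin{align*}
\text{(D1)}\quad & w = 0 \concat 0^{a-1} 1^2 0^b \quad (\text{requires } a \geq 1), \\
\text{(D2)}\quad & w = 0^a 1 \concat 1\,0^b, \\
\text{(D3)}\quad & w = 0^a 1^2 0^{b-1} \concat 0 \quad (\text{requires } b \geq 1),
\end{align*}
and these have pairwise distinct split points (the lengths $1$, $a+1$, $a+b+1$ of $u_1$), so each genuine one is a distinct decomposition.

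To decide which are genuine, I would apply the inductive hypothesis to the length-$(L-1)$ words $0^{a-1} 1^2 0^b$ and $0^a 1^2 0^{b-1}$: (D1) is a genuine Ulam decomposition if and only if $a \geq 1$ and $a + b$ is even, (D3) if and only if $b \geq 1$ and $a + b$ is even, and (D2) if and only if $(a,b) \neq (0,0)$; the constraints $u_1 \neq u_2$ are trivial to verify in each case. A parity case split then finishes the proof: if $a + b$ is odd, then (D1) and (D3) are excluded while (D2) is present, so $w$ has exactly one Ulam decomposition and is Ulam; if $a + b$ is even, then either $(a,b) = (0,0)$ and $w = 11$ has no decomposition at all, or else $a + b \geq 2$, forcing $a \geq 1$ or $b \geq 1$ and hence making at least two of (D1)--(D3) simultaneously genuine, so $w$ is not Ulam.

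I expect the main obstacle to be purely the bookkeeping of the middle step: checking that the three families of proper prefixes listed above really exhaust all proper prefixes of $w$, and that the degenerate cases $a = 0$ and $b = 0$ neither create accidental coincidences among (D1), (D2), (D3) nor destroy the unique decomposition in the odd case. That the inductive hypothesis is invoked only on strictly shorter words is automatic, since both auxiliary words have length $L - 1 < L$.
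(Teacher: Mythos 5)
Your proof is correct: the enumeration of proper prefixes is exhaustive, the degenerate cases $a=0$, $b=0$, and $(a,b)=(0,0)$ are handled, and the parity count of genuine decompositions (D1)--(D3) gives exactly the claimed criterion. Note that the paper itself does not prove this statement---it is cited as Theorem 3.5 of Bade et al.\ \cite{BCLL_2020}---but your argument is precisely the style of induction the paper uses for its own Lemma \ref{lemma: sierpinski level 1} and Lemma \ref{lemma: four ones}: list the candidate split points, decide each via the inductive hypothesis together with the $0^{a}10^{b}$ criterion of Theorem \ref{theorem: bade 1}, and check when exactly one survives.
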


\begin{theorem}[Theorem 3.6 in \cite{BCLL_2020}]\label{theorem: bade 3}
For any $a,b \in \Z_{\geq 0}$ such that $a + b \geq 2$, $0^a 101 0^b \in \Ulam$ if and only if the length of the word is even (that is, $a + b = 1 \mod 2$).
\end{theorem}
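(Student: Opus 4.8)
The plan is to work straight from the recursive definition. Writing $w = 0^a 101 0^b$, of length $n = a+b+3$, I must show that the number of unordered decompositions $w = u_1 \concat u_2$ into two \emph{distinct} Ulam words equals exactly $1$ when $a+b$ is odd (equivalently, when $n$ is even), and is $\neq 1$ otherwise. The first step is the elementary observation that $0^k \in \Ulam$ only for $k=1$: for $k \geq 2$ every decomposition $0^k = 0^i \concat 0^{k-i}$ forces $i = k-i = 1$, which is not a decomposition into distinct words. Consequently, any cut of $w$ leaving a pure run of at least two zeros on one side cannot contribute, and since all but three letters of $w$ are zeros this confines the admissible cuts to a neighborhood of the central block $101$. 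Concretely, the only cuts that can possibly produce two Ulam factors are
$$ 0\mid 0^{a-1}1010^{b},\quad 0^a1\mid 010^b,\quad 0^a10\mid 10^b,\quad 0^a1010^{b-1}\mid 0, $$
where the first requires $a \geq 1$ and the last requires $b \geq 1$ (the enumeration needs a little care when $a$ or $b$ is $0$ or $1$).

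Next I would identify each factor. The two middle cuts produce factors of Gould type, namely $0^a1$, $010^b = 0^1 1 0^b$, $0^a10 = 0^a 1 0^1$ and $10^b = 0^0 1 0^b$, whose Ulam status is settled by Theorem \ref{theorem: bade 4}. Since $\binom{a}{a} = \binom{b}{0} = 1$ are odd while $\binom{1+b}{1} = 1+b$ and $\binom{a+1}{a} = a+1$, the cut $0^a1\mid 010^b$ contributes exactly when $b$ is even and $0^a10\mid 10^b$ exactly when $a$ is even; in each case $u_1 \neq u_2$ must be checked, which holds once $a+b \geq 2$. The two extreme cuts peel off a single $0$ and leave the shorter words $0^{a-1}1010^{b}$ and $0^a 1010^{b-1}$, again of the $101$-shape. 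This self-similarity is precisely what drives a proof by strong induction on $a+b$.

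I would therefore set the base cases $a+b \leq 1$ by direct inspection, noting $101,\,1010,\,0101 \notin \Ulam$; this is exactly where the naive ``odd length'' rule fails, and hence the source of the hypothesis $a+b \geq 2$. For the inductive step, writing $N := a+b \geq 2$ and $U(a',b') = 1$ iff $0^{a'}1010^{b'} \in \Ulam$, the count of admissible decompositions is
$$ \mathrm{count}(a,b) = U(a-1,b)\,[a\ge 1] + [b\equiv 0] + [a\equiv 0] + U(a,b-1)\,[b\ge 1], $$
where the bracketed congruences are taken mod $2$. Using the induction hypothesis and the base values to evaluate the two $U$ terms, splitting into the parity cases of $(a,b)$, and exploiting the reverse symmetry $0^a1010^b \leftrightarrow 0^b1010^a$ to halve the work, I expect the following: if $N$ is odd, exactly one Gould cut fires and both recursive terms vanish, giving count $1$; if $N$ is even, either both Gould cuts fire or both recursive terms fire, giving count $\geq 2$ (or $0$ in the smallest case $a=b=1$).

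The main obstacle is not any single computation but keeping the inductive bookkeeping airtight at the boundary. The delicate point is that the recursive terms $U(a-1,b)$ and $U(a,b-1)$ reference words of length $N-1$, some of which are the anomalous base cases where the parity rule does \emph{not} hold; I must verify that these exceptions only ever lower the count below the generic value and never conspire to leave an even-$N$ word with exactly one decomposition. Tabulating the handful of small instances $a+b \in \{2,3\}$ together with the edge cases $a=0$ or $b=0$ (which reduce to a single family under reverse symmetry) closes this gap.
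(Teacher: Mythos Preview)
The paper does not supply its own proof of this statement: it is quoted as Theorem~3.6 of Bade, Cui, Labelle, and Li and used as a black box throughout. There is therefore nothing in this paper to compare your argument against.

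That said, your inductive plan is sound and would yield a correct proof. The enumeration of admissible cuts is complete (the observation that $0^k \in \Ulam$ only for $k=1$ does indeed pin the nontrivial cuts to the four you list), and the parity analysis of the two ``Gould'' cuts is right. One labeling slip: the result you are invoking for the factors $0^a1$, $010^b$, $0^a10$, $10^b$ is the binomial-parity criterion for $0^x10^y$, which in this paper is Theorem~\ref{theorem: bade 1}, not the counting statement Theorem~\ref{theorem: bade 4}. The boundary anomalies you flag at $a+b \in \{0,1\}$ (where $101$, $0101$, $1010 \notin \Ulam$) are exactly the ones that must be fed into the induction by hand, and your case split by the parity of $(a,b)$, together with the separate treatment of $a=0$ or $b=0$, closes the argument.
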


\begin{lemma}\label{lemma: four ones}
For any $a,b\in\Z_{\geq 0}$ such that $a+b \geq 1$, $0^a 1^4 0^b \in \Ulam$ if and only if $a+b = 1 \pmod 4$.
\end{lemma}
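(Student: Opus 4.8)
The plan is to mimic the inductive structure that Bade et al.\ presumably used for $0^a 1^2 0^b$ and $0^a 101 0^b$, but now applied to words with a block of four consecutive ones. Write $w = 0^a 1^4 0^b$ with $n = a + b + 4$. By the complement-and-reverse symmetries recalled above, I may assume without loss of generality that $a \leq b$, which halves the case analysis. The heart of the matter is to enumerate all ways of writing $w = u_1 \concat u_2$ with $u_1, u_2 \in \Ulam$ and $u_1 \neq u_2$, and show there is exactly one such decomposition precisely when $a + b \equiv 1 \pmod 4$.

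First I would classify the possible splits by where the cut falls relative to the block $1^4$. If the cut lies strictly inside the run of zeros on the left, then $u_1 = 0^i$ for $1 \leq i < a$, which is Ulam only for $i = 1$; this forces $u_1 = 0$ and $u_2 = 0^{a-1} 1^4 0^b$, and I then need to know when $0^{a-1} 1^4 0^b$ is Ulam---this is the same family with $a$ decreased, so induction applies. Symmetrically for a cut inside the right run of zeros (using $a \leq b$ to control the endpoint cases $b=0,1$ separately). If the cut falls inside the block $1^4$, then $u_1 = 0^a 1^j$ and $u_2 = 1^{4-j} 0^b$ for $j = 1,2,3$; here I invoke Theorem~\ref{theorem: bade 4} (in its reversed form, for words $1^k 0^b$ and $0^a 1^k$) to determine exactly which of these six candidate pieces are Ulam, and the answer is governed by the parity of binomial coefficients, i.e.\ by Gould's sequence evaluated at $a$ and at $b$. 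The remaining cases---cut immediately before or immediately after $1^4$, giving $u_1 = 0^a$ or $u_1 = 0^a 1^4$ and so on---are handled directly: $0^a$ is Ulam only for $a \leq 1$, and $0^a 1^4 0^b$ split as $0^a 1^4 \concat 0^b$ requires $0^a 1^4 \in \Ulam$, which is $G$-controlled, plus $b \leq 1$.

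Assembling these pieces, the number of valid decompositions of $0^a 1^4 0^b$ becomes an explicit expression in terms of (i) whether the two ``shrunk'' words $0^{a-1}1^4 0^b$ and $0^a 1^4 0^{b-1}$ are Ulam, and (ii) the parities of $\binom{a+k}{k}$ and $\binom{b+k}{k}$ for small $k$. I would then set up the induction on $n$: the base cases are the small $a+b$ values, which I verify against the tables $\Ulam_1,\dots,\Ulam_4$ (and a few more computed values if needed), and the inductive step is a bookkeeping identity showing that the decomposition count equals $1$ iff $a+b \equiv 1 \pmod 4$, given that the claim holds for smaller $n$. The mod-$4$ periodicity should emerge from the interaction of the mod-$2$ Gould/Lucas data at $a$ and $b$ with the one-step recursion in the zero-run cuts---this is exactly the phenomenon that turned ``odd length'' into the criterion for $1^2$ and $101$, but one level deeper.

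The main obstacle I anticipate is the interplay between the ``recursive'' contributions (from cutting inside a zero-run, which refer back to the same family) and the ``closed-form'' contributions (from cutting inside $1^4$, governed by parities of binomial coefficients). Unlike the $1^2$ case, where cutting inside the ones gives essentially one new candidate, here there are three interior cuts and several boundary cuts, so several candidate decompositions can be simultaneously valid for special $(a,b)$; ensuring these always cancel down to a count of exactly one---and never zero or two---in the residue class $a+b \equiv 1 \pmod 4$, and strictly not one otherwise, will require carefully tracking the 2-adic valuations of $a$, $b$, $a+1$, $b+1$ and splitting into subcases accordingly. I expect the endpoint cases $a \in \{0,1\}$ (and by symmetry the constraint $a+b \geq 1$ excluding $a=b=0$) to need separate, slightly delicate attention, since there the ``shrink $a$'' branch either disappears or degenerates.
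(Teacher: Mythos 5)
Your overall architecture---induction on the length, a symmetry reduction, enumeration of the possible cuts $w = u_1 \concat u_2$, and a residue-class count of valid decompositions---is exactly the paper's strategy. But there is a concrete gap in the tool you plan to use for the cuts inside the block $1^4$. For each such cut, at least one of the two pieces contains a run of two or three consecutive ones ($0^a1^2$, $1^20^b$, $0^a1^3$, or $1^30^b$), and none of these is of the form $0^c10^d$, so Theorem~\ref{theorem: bade 4} (even after reversing or complementing) says nothing about them, and their status is not ``Gould's sequence evaluated at $a$ and at $b$.'' The two-ones pieces are governed by Theorem~\ref{theorem: bade 2} (parity of the length). The three-ones pieces are the real sticking point: you need to know exactly when $1^30^{m-3}$ is Ulam, and this is a new statement that must be proved before the induction can run. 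The paper isolates it as Lemma~\ref{lemma: sierpinski level 1} ($1^30^{m-3}\in\Ulam$ iff $m \equiv 0,1 \pmod 4$), proved by its own separate induction from Theorem~\ref{theorem: bade 2}; it is this mod-$4$ condition, interacting with the mod-$2$ condition for the two-ones pieces and with the inductive hypothesis for the zero-run cuts, that produces the mod-$4$ answer in the final table. (The general fact that $1^y0^{x-y}$ is governed by Pascal's triangle mod $2$ is Theorem~\ref{theorem: sierpinski}, proved later in the paper; it is not available as a black box here, and in any case is not Theorem~\ref{theorem: bade 4}.) Without the three-ones criterion, the decomposition count in your inductive step cannot be evaluated.

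Two smaller points. First, the piece $0^a1^4$ arising from the cut $0^a1^4 \concat 0^b$ is itself a member of the family being proved (the case $b=0$), so it is controlled by the inductive hypothesis, not by Gould's sequence. Second, your normalization $a \leq b$ leaves the boundary cut $0^a \concat 1^40^b$ alive whenever $a=1$; the paper instead uses $n \geq 9$ to assume the left zero-run has length at least $3$, which eliminates that cut and reduces the $b=0$ endpoint to a short separate case with only two candidate decompositions. Your version is workable but requires the extra subcases you anticipate at $a \in \{0,1\}$ to be written out in full.
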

\begin{proof}
We will use proof by induction on the length of the word $n$, where the base cases $n=5,6,7,8$ can be verified directly. Assume the statement holds for all words of length strictly less than $n$, and consider the word $u=0^k 1^4 0^l$ of length $n$, where, since $n\geq9$, at least one of $k\geq3$ or $l\geq3$. By applying the reverse map to switch $k$ and $l$ if necessary, we may assume that $k \geq 3$.

\begin{description}
    \item[Case 1]  $l = 0$.
        
        \noindent The only possible representations are $0\concat 0^{k-1}1^4$ and $0^k1^3\concat 1$. By the inductive hypothesis, the first is valid if and only if $n = 2 \pmod 4$. By Lemma \ref{lemma: sierpinski level 1}, the second is valid if and only if $n = 1,2 \pmod 4$. Thus, exactly one of these representations is valid if and only if $n = 1 \pmod 4$.
    \item[Case 2] $l \geq 1$.

    \noindent There are five potential representations:
        \begin{enumerate}
            \item $0\concat 0^{k-1}1^40^l$,
            \item $0^k1\concat 1^30^l$,
            \item $0^k1^2\concat 1^20^l$,
            \item $0^k1^3\concat 10^l$, and
            \item $0^k1^40^{l-1}\concat 0$.
        \end{enumerate}
    Observe that by the inductive hypothesis, representations (1) and (5) are valid if and only if $n = 2 \pmod 4$, which is to say that $k + l = 2 \pmod 4$. By Theorem \ref{theorem: bade 1} and Lemma \ref{lemma: sierpinski level 1}, representation (2) is valid if and only if $l = 0,3\pmod 4$; similarly, representation (4) is valid if and only if $k = 0,3\pmod 4$. Finally, by Theorem \ref{theorem: bade 2}, representation (3) is valid if and only if $k = l = 1 \pmod 2$. This allows us to count the number of valid representations in terms of the congruence classes of $k$ and $l$ modulo $4$.
    $$
    \begin{array}{c|cccc}
         k\backslash l & 0 & 1 & 2 & 3 \\ \hline
         0 & 2 & 1 & 3 & 2 \\
         1 & 1 & 3 & 0 & 2 \\
         2 & 3 & 0 & 0 & 1 \\
         3 & 2 & 2 & 1 & 5
    \end{array}
    $$
    In particular, there is a unique representation if and only if $n = 1 \pmod 4$.
\end{description}
\end{proof}

With this, we are ready to give a proof of the general, linear bound.

\begin{proof}[Proof of Theorem \ref{theorem: linear growth rate}]
We split into three cases.
\begin{description}
    \item[Case 1] $n$ is even.

    By Theorem \ref{theorem: bade 3}, we know that $0^a 101 0^{n-a-3} \in \Ulam_n$ for all $0 \leq a \leq n - 3$---this yields $n-2$ Ulam words. By Theorem \ref{theorem: bade 4}, we also know that there are $G(n-1)$ Ulam words of length $n$ of the form $0^a10^b$. Note that these two sets of Ulam words do not intersect (they have different numbers of ones), and since $n-1$ is odd, $G(n-1) \geq 2^2 = 4$. In total, this yields $n + 2$ Ulam words.

    Note that $\widehat{0^a1010^{n-a-3}}=1^a0101^{n-a-3} = 0^{a_1}10^{b_1}$ if and only if $n=3$, so the reverses of the constructed Ulam words are also distinct Ulam words. Therefore, we have at least $2n + 4$ Ulam words in this case.
    
    \item[Case 2] $n = 3 \pmod 4$.

    By Theorem \ref{theorem: bade 2}, we know that $0^a 1^2 0^{n-a-2} \in \Ulam_n$ for all $0 \leq a \leq n - 2$---this yields $n-1$ Ulam words. Since $n-1 = 2 \pmod 4$, $G(n-1) \geq 2^2 = 4$, and so we can again use Theorem \ref{theorem: bade 4} to conclude that there are at least $4$ words $0^a10^b$ of the right length. In total, this yields $n+3$ Ulam words.

    Note that $\widehat{0^a 1^20^{n-a-2}} = 1^a0^2 1^{n-a-2}=0^{a_1}10^{b_1}$ if and only if $a=0$ and $a_1=2$. Therefore, the reverses of our two families of constructed Ulam words intersect, but only in two places; therefore, we have $2(n+3)-2 = 2n + 4$ Ulam words.
    
    \item[Case 3] $n = 1 \pmod 4$.

    As in the previous case, we have $n-1$ words of the form $0^a 1^20^{n-a-2}$, but it is possible that $G(n-1) = 2$, so we have to argue differently: specifically, we use Lemma \ref{lemma: four ones} to conclude that $0^a 1^4 0^{n-4-a} \in \Ulam$ for all $0 \leq a \leq n-4$, which yields another $n-3$ Ulam words, for a total of at least $2n-4$.

    Observe that $\widehat{0^a1^20^{n-a-2}}\neq 1^a0^21^{n-a-2}=0^{a_1}1^40^{n-4-a_1}$ ever, so we may simply double our count of Ulam words. In total, we have $4n-8$, which is at least $2n+4$ if $n \geq 6$.
\end{description}

In each case, we have identified at least $2n+4$ distinct Ulam words.
\end{proof}

Next, we tackle the exponential bound, which we approach in a completely different fashion using the following lemma.

\begin{lemma}\label{lemma: weak bound}
For any $n \in \Z_{\geq 1}$,
    $$
    \# \Ulam_n^2 \leq \#\Ulam_n + \#\Ulam_{n+1}+ \ldots + \#\Ulam_{2n}.
    $$
\end{lemma}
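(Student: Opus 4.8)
The plan is to exploit the defining property of Ulam words in the direction that makes counting easy: every Ulam word $w$ of length between $n+1$ and $2n$ that happens to be a concatenation of \emph{two Ulam words of length exactly $n$} accounts for exactly one ordered... wait — actually the cleanest route is to count ordered pairs via an injection. Consider the set $P$ of ordered pairs $(u_1,u_2)$ with $u_1,u_2 \in \Ulam_n$ and $u_1 \neq u_2$; this has size $\#\Ulam_n^2 - \#\Ulam_n$, so it suffices to produce an injection from $P$ into the disjoint union $\Ulam_{n+1} \sqcup \Ulam_{n+2} \sqcup \cdots \sqcup \Ulam_{2n}$, and then absorb the leftover $\#\Ulam_n$ term. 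The natural candidate map is $(u_1,u_2) \mapsto u_1 \concat u_2$, whose image lands in words of length $2n$; but that map is not obviously injective and its image need not consist of Ulam words. So instead I would map $(u_1,u_2)$ to the shortest prefix of $u_1\concat u_2$ that is itself an Ulam word — or, dually, exploit that $u_1 \concat u_2$, having \emph{at least} the decomposition $u_1 \concat u_2$ into two distinct Ulam words, is Ulam precisely when that decomposition is unique.

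Here is the cleaner formulation I would actually pursue. For a fixed $w$ of length $\ell$ with $n < \ell \le 2n$, the number of ordered pairs $(u_1,u_2) \in \Ulam_n \times \Ulam_n$ with $u_1 \concat u_2 = w$ is at most one: such a pair forces $u_1$ to be the length-$n$ prefix of $w$ and $u_2$ the length-$(\ell-n)$ suffix, and $\ell - n \le n$, so the constraint $|u_1| = n$ pins $u_1$ down uniquely — hence $u_2$ too. Wait, that only works if we additionally fix $|u_1| = n$; but we want both factors to have length $n$, which forces $\ell = 2n$. That collapses the range. So the genuine idea must be: allow the two Ulam factors to have \emph{possibly different} lengths, not both equal to $n$. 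Let me restate: the correct interpretation is that $\#\Ulam_n^2$ counts ordered pairs from $\Ulam_n$, but we compare against words of length in $[n+1,2n]$ by a different device.

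The device I would use is \emph{prefixes}. Given $(u_1, u_2) \in \Ulam_n \times \Ulam_n$, the word $u_1 \concat u_2$ has length $2n$ and admits the decomposition into the two Ulam words $u_1, u_2$; if $u_1 \neq u_2$, look at whether $u_1 \concat u_2 \in \Ulam$. If it is, record $u_1 \concat u_2 \in \Ulam_{2n}$. If it is not, then by definition there is a \emph{second} way to write $u_1 \concat u_2 = v_1 \concat v_2$ with $v_1, v_2$ Ulam and $v_1 \ne v_2$; in particular there is a proper nonempty Ulam prefix $v_1$ of $u_1 \concat u_2$ with $|v_1| \ne n$. Take the \emph{shortest} nonempty Ulam prefix $p$ of $u_1 \concat u_2$; one shows $n < |p| < 2n$ cannot be arranged directly, so this still needs care. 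Honestly, the key step — and the main obstacle — is setting up an injection whose target genuinely spans lengths $n+1$ through $2n$; I expect the right move is to send $(u_1,u_2)$ with $u_1 \ne u_2$ to the \emph{unique} shortest Ulam word that has $u_1$ as a proper prefix and is a prefix of $u_1 \concat u_2$, argue its length lies in $(n, 2n]$, and check distinct pairs give distinct targets by recovering $u_1$ as the length-$n$ prefix and $u_2$'s relevant portion from the suffix; the diagonal pairs $u_1 = u_2$ (there are $\#\Ulam_n$ of them) are exactly absorbed by the extra $\#\Ulam_n$ summand on the right, since $u \concat u$ is never Ulam and contributes nothing, adjusting the inequality from strict injection into the $[n+1,2n]$ block up to the stated bound. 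The delicate point throughout is verifying that such a shortest qualifying Ulam prefix always exists and always has length at most $2n$ but more than $n$ — that is where I would spend the real work, likely invoking that $u_1$ itself is Ulam to seed the prefix search and a minimality argument to keep the length below $2n$.
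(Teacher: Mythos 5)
Your opening moves match the paper's: you form the off--diagonal set $X=\left\{(u_1,u_2)\in\Ulam_n^2 \mid u_1\neq u_2\right\}$, note that $\#X=\#\Ulam_n^2-\#\Ulam_n$ so that the extra $\#\Ulam_n$ on the right is pure arithmetic (your aside that $u\concat u$ is ``never Ulam'' is irrelevant to this), and aim to compare $X$ with $\Ulam_{n+1}\sqcup\cdots\sqcup\Ulam_{2n}$. But the step carrying all the weight is missing, and the specific device you settle on is not well defined. You propose to send $(u_1,u_2)$ to the shortest Ulam word having $u_1$ as a proper prefix and contained as a prefix of $u_1\concat u_2$. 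Such a word need not exist: when $W=u_1\concat u_2$ is not Ulam, the definition only guarantees a \emph{second} decomposition $W=v_1\concat v_2$ into distinct Ulam words with $|v_1|=k\neq n$, and nothing prevents every such second decomposition from having $k<n$. In that case the Ulam factor of length greater than $n$ is the \emph{suffix} $v_2$, of length $2n-k\in[n+1,2n-1]$, and no prefix of $W$ of length in $(n,2n)$ need be Ulam at all. Your search is confined to prefixes throughout, so the map can fail to have a target. (The ``shortest nonempty Ulam prefix'' variant is worse: that is just the first letter of $u_1$.)

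The paper's argument is precisely the dichotomy you gesture at and then abandon: for $(u_1,u_2)\in X$, either $u_1\concat u_2\in\Ulam_{2n}$, or --- since $W$ already admits one decomposition into distinct Ulam words and is not Ulam, hence admits at least two --- there is another decomposition $W=v_1\concat v_2$ with $|v_1|=k\neq n$, and whichever of $v_1,v_2$ is longer lies in $\Ulam_\ell$ for some $n+1\leq\ell\leq 2n-1$. That statement is symmetric in prefixes and suffixes, which is exactly what your construction lacks. Finally, you explicitly defer ``the real work'' of showing distinct pairs get distinct targets; that is not a postponable detail, since a factor of length $\ell<2n$ determines $u_1$ (as its length-$n$ prefix, say) but not $u_2$, so injectivity of the assignment is genuinely delicate --- the paper itself is quite terse on this point, but it at least states the correct dichotomy from which the count is extracted. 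As written, your proposal identifies the right quantities but does not contain a proof.
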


\begin{proof}
Consider the set
    $$
    X:=\left\{(w_1,w_2)\in \Ulam_n^2\middle|w_1\neq w_2\right\}.
    $$
For any $(w_1,w_2)\in X$, either $w_1\concat w_2 \in \Ulam_{2n}$ or there exists $v_1 \in \Ulam_k$, $v_2 \in \Ulam_{2n-k}$ such that $w_1 \concat w_2 = v_1 \concat v_2$, where $k \in [1,n-1]\cup[n+1,2n]$; of course, if $k \in [1,n-1]$, then $2n-k \in [n+1,2n]$, and so we may conclude that
    $$
    \# X \leq \#\Ulam_{n+1}+ \ldots + \#\Ulam_{2n}.
    $$
On the other hand,
    $$
    \# X = \#\Ulam_n^2 - \# \Ulam_n.
    $$
\end{proof}

As a consequence, we get the following very weak lower bound: for any $n \in \Z_{\geq1}$,
    \begin{align}
    \max_{n \leq i \leq 2n} \# \Ulam_i \geq \frac{\#\Ulam_n^2}{n+1} \label{eq: weak lower bound}.
    \end{align}
This is sufficient for our purposes.

\begin{proof}[Proof of Theorem \ref{theorem: exponential growth rate}]
Choose any $n_1 \in \Z_{\geq 6}$ and let
    $$
    \alpha_0 := \left(\frac{\#\Ulam_{n_1}}{2(n_1+1)}\right)^{1/n_1}.
    $$
Then $\#\Ulam_{n_1}=2(n_1+1)\alpha_0^{n_1}$. By Theorem \ref{theorem: linear growth rate}, we know that $\#\Ulam_{n_1} > 2n_1 + 2$, hence $1 < \alpha_0 < 2$. Ergo, for any $1 < \alpha < \alpha_0$, $\#\Ulam_{n_1} = 2C(n_1+1)\alpha^{n_1}$ for some $C > 1$. Recursively define a sequence $n_1,n_2,\ldots$ such that $n_i$ is the unique integer $n_{i-1} \leq n_i \leq 2n_{i-1}$ such that $\#\Ulam_{n_i}$ is as large as possible. We'll prove by induction that $\#\Ulam_{n_k}\geq 2C^{2^k}(n_k+1)\alpha^{n_k}$. The base case $k=1$ is clear; for the induction step, we can apply our bound (\ref{eq: weak lower bound}) to see that
    \begin{align*}
    \Ulam_{n_{k+1}}&=\max_{n_k \leq i \leq 2n_k} \# \Ulam_i \geq \frac{\#\Ulam_{n_k}^2}{n_k+1} \\
    &> \frac{4C^{2^{k+1}}(n_k+1)^2}{n_k+1}\alpha^{2n_k} \\
    &> 2C^{2^{k+1}}(2n_k+1)\alpha^{2n_k} \\
    &\geq 2C^{2^{k+1}}(n_{k+1}+1)\alpha^{n_{k+1}}.
    \end{align*}
Since $C^{2^k}\rightarrow\infty$, we can conclude that for all $c > 0$, there exist infinitely many $n$ such that $\#\Ulam_n \geq c \alpha^n$. Therefore,
    $$
    \limsup_n \frac{\#\Ulam_n}{\alpha^n} = \infty,
    $$
as was claimed. All that remains is to find a value of $\alpha_0$ that works. In our case, we used $n_0=30$, which gives the $\alpha_0$ in the statement of the theorem.
\end{proof}

As can be seen from the proof of this theorem, if it is true that $\#\Ulam_n = \Theta(n^{-3/10}2^n)$ as we conjecture, then it will be possible to improve the constant $\alpha_0$ arbitrarily close to $2$ simply by computing $\#\Ulam_n$ for larger and larger $n$. Unfortunately, this rapidly becomes impractical and in any case will never suffice to prove the theorem with $\alpha_0=2$, as we suspect must be true.

\section{Patterns in Ulam Words}\label{section: unconditional results}

Bade et al. proved various results regarding Ulam words containing certain patterns \cite{BCLL_2020}. We offer a similar collection of results. We begin with a couple of intermediate results that allow us to prove Theorem \ref{theorem: sierpinski}.

\begin{theorem}[Theorem 3.4 in \cite{BCLL_2020}]\label{theorem: bade 1}
A word of the form $0^a 1 0^b$ is Ulam if and only if
    $$
    \left(\begin{smallmatrix}
        a + b \\ a
    \end{smallmatrix}\right) = 1 \mod 2.
    $$
\end{theorem}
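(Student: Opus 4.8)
The plan is to exploit that $0^a 1 0^b$ contains exactly one $1$, so its factorizations into two words are very constrained. First I would record the elementary fact that $0^k \in \Ulam$ if and only if $k = 1$: $0$ is Ulam by definition, $00 = 0 \concat 0$ is not (the two factors coincide), and for $k \geq 3$ a factorization $0^k = 0^j \concat 0^{k-j}$ would need $j = k-j = 1$, which is impossible. Given this, I would enumerate the factorizations $0^a 1 0^b = u_1 \concat u_2$: a cut inside the leading zero block makes $u_1 = 0^k$ with $k \le a$, forcing $k = 1$ and $(u_1,u_2) = (0,\,0^{a-1}10^b)$; a cut at the $1$ or inside the trailing zero block makes $u_2 = 0^m$, forcing $m = 1$ and $(u_1,u_2) = (0^a10^{b-1},\,0)$. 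So the only candidate factorizations are these two, available respectively when $a \geq 1$ and when $b \geq 1$, and in each the two factors are distinct (one has no $1$, the other has the $1$).

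Writing $\chi(a,b) = 1$ if $0^a 1 0^b \in \Ulam$ and $0$ otherwise, the number of valid factorizations is then
$$
N(a,b) = [a \geq 1]\,\chi(a-1,b) + [b \geq 1]\,\chi(a,b-1),
$$
and $0^a10^b \in \Ulam$ precisely when $N(a,b) = 1$. On the boundary, $N(a,0) = \chi(a-1,0)$ and $N(0,b) = \chi(0,b-1)$, so from $\chi(0,0) = 1$ (the word $1$ is Ulam) an induction on the length gives $\chi(a,0) = \chi(0,b) = 1$ for all $a,b \ge 0$. For $a,b \ge 1$ I would check the three cases $N(a,b) \in \{0,1,2\}$: if the sum $\chi(a-1,b) + \chi(a,b-1)$ equals $1$ the word is Ulam and $\chi(a,b) = 1$; if it equals $0$ or $2$ the word is not Ulam and $\chi(a,b) = 0$. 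In all cases $\chi(a,b) \equiv \chi(a-1,b) + \chi(a,b-1) \pmod 2$.

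Thus $\chi(a,b)$ satisfies, modulo $2$, the same Pascal recursion and the same boundary values ($\binom{a}{a} = \binom{b}{0} = 1$) as $\binom{a+b}{a} \bmod 2$, so an induction on $a+b$ yields $\chi(a,b) \equiv \binom{a+b}{a} \pmod 2$, which is the theorem. I expect the main obstacle to be the factorization enumeration in the first step: one must carefully rule out every split other than peeling a single zero off one end, and handle the degenerate endpoints $a = 0$ or $b = 0$ correctly (where one of the two candidate factorizations is absent) so that the boundary conditions of the recursion are correctly identified.
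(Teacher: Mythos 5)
Your proof is correct. The paper does not prove this statement itself---it is imported as Theorem 3.4 of \cite{BCLL_2020}---but your argument is exactly the standard one: the observation that $0^k\in\Ulam$ only for $k=1$ reduces the factorizations to peeling a single $0$ off either end, the resulting ``exactly one of the two'' condition gives $\chi(a,b)\equiv\chi(a-1,b)+\chi(a,b-1)\pmod 2$, and matching boundary values with Pascal's recursion finishes the induction. This is the same technique the paper itself uses for the generalization in Theorem \ref{theorem: sierpinski}, so nothing further is needed.
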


\begin{lemma}\label{lemma: sierpinski level 1}
For any $n \geq 3$, $1^30^{n-3} \in \Ulam$ if and only if $n=0 \pmod 4$ or $n=1 \pmod 4$.
\end{lemma}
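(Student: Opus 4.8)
The plan is to proceed by strong induction on $n$, mirroring the structure of the proof of Lemma \ref{lemma: four ones}. We want to count, for the word $w = 1^3 0^{n-3}$, the number of ways to write $w = u_1 \concat u_2$ with $u_1, u_2 \in \Ulam$ distinct, and show this count is exactly $1$ precisely when $n \equiv 0$ or $1 \pmod 4$. Since the first three letters are fixed as $1$'s and the rest are $0$'s, the only possible cut points produce pairs of the following shapes: $1 \concat 1^2 0^{n-3}$, $1^2 \concat 1 0^{n-3}$, $1^3 0^j \concat 0^{n-3-j}$ for $1 \le j \le n-4$, and (when $n \ge 5$) we should also be careful about $1^3 \concat 0^{n-3}$. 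Actually the clean enumeration: the left factor is either $1$, $1^2$, $1^3$, or $1^3 0^j$ for $1 \le j \le n-4$; correspondingly the right factor is $1^2 0^{n-3}$, $1 0^{n-3}$, $0^{n-3}$, or $0^{n-3-j}$.

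The bulk of the work is then a finite bookkeeping exercise using the results already available. The factor $1$ is always Ulam. The factor $1^2$ is always Ulam. The factor $1^3$ is Ulam iff the right factor $0^{n-3}$ has $n - 3 \ge 1$ (i.e. $n \ge 4$) and... wait — actually $1^3 = 1^3 0^0$, which is Ulam iff $\binom{?}{?}$... no: $1^3$ has length $3$ and is in $\Ulam_3 = \{001, 011, 100, 110\}$? No, $111 \notin \Ulam_3$, so $1^3$ is not Ulam. So the decomposition $1^3 \concat 0^{n-3}$ never contributes. For the remaining cases: $1^2 0^{n-3}$ is Ulam iff $n - 1$ is odd, i.e. $n$ even, by Theorem \ref{theorem: bade 2} (via complement/reverse: $1^2 0^{n-3} = \widehat{\overline{0^{n-3} 1^2}}$, and Theorem \ref{theorem: bade 2} governs $0^a 1^2 0^b$); similarly $1 0^{n-3}$ is Ulam iff $\binom{n-3}{0} = 1$ is odd, which is always, so by Theorem \ref{theorem: bade 1} this factor is always Ulam — hence the decomposition $1^2 \concat 1 0^{n-3}$ is always valid, which already seems to overcount. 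Let me recount: we need $u_1 \ne u_2$, and we need the *total* count to be $1$. So I would tabulate, for each residue $n \bmod 4$, the validity of: (a) $1 \concat 1^2 0^{n-3}$ [valid iff $1^2 0^{n-3} \in \Ulam$]; (b) $1^2 \concat 1 0^{n-3}$ [valid iff $1 0^{n-3} \in \Ulam$, i.e. always, assuming $1^2 \ne 1 0^{n-3}$ which holds for $n \ge 5$]; (c) each $1^3 0^j \concat 0^{n-3-j}$ for $1 \le j \le n-4$ [valid iff $1^3 0^j \in \Ulam$, which by the inductive hypothesis holds iff $j + 3 \equiv 0, 1 \pmod 4$]. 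Summing (c) over $j$ contributes roughly half of $n - 4$ terms, so the total will be linear in $n$ unless I am misreading the problem — which means I must be missing a constraint. The resolution is surely that $1^3 0^{n-3}$ is *not* of the form claimed to be rarely Ulam in isolation; rather, re-examining: $0^{n-3-j}$ must also be Ulam, and a pure block $0^m$ is Ulam only for $m = 1$ (since $0^m$ for $m \ge 2$ is $0 \concat 0^{m-1} = 0^{m-1} \concat 0$, two *equal* decompositions into... no wait, $0 \ne 0^{m-1}$ when $m \ge 3$). Hmm — in fact $0^m$ for $m \ge 2$: is it Ulam? $00 = 0 \concat 0$, but $u_1 = u_2$, so not Ulam; $000 = 0 \concat 00$, and $00 \notin \Ulam$, so $000$ is not Ulam; inductively no $0^m$ with $m \ge 2$ is Ulam. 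So in (c), the right factor $0^{n-3-j}$ is Ulam only when $n - 3 - j = 1$, i.e. $j = n - 4$.

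So the true enumeration has exactly three candidate decompositions for $n \ge 5$: (a) $1 \concat 1^2 0^{n-3}$, valid iff $1^2 0^{n-3} \in \Ulam$ iff $n$ even; (b) $1^2 \concat 1 0^{n-3}$, valid iff $1 0^{n-3} \in \Ulam$, which by Theorem \ref{theorem: bade 1} holds iff $\binom{n-3}{1}$... no, $1 0^{n-3} = 0^0 1 0^{n-3}$ so $\binom{n-3}{0} = 1$, always odd, so (b) is *always* valid; (c) $1^3 0^{n-4} \concat 0$, valid iff $1^3 0^{n-4} \in \Ulam$ iff (by induction) $n - 1 \equiv 0, 1 \pmod 4$ iff $n \equiv 1, 2 \pmod 4$. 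Hence the count of valid decompositions is: $n \equiv 0$: (b) only, count $1$, Ulam. $n \equiv 1$: (b),(c), count $2$... that breaks it. I must have a sign error in Theorem \ref{lemma: four ones}'s internal citation — indeed the Lemma's proof references "Lemma \ref{lemma: sierpinski level 1}, the second is valid if and only if $n = 1,2 \pmod 4$" for the factor $0^k 1^3$, consistent with what I derived. So for $w = 1^3 0^{n-3}$ with $n \equiv 1$: (b) valid, (c) valid — that's two, contradicting Ulam-ness. The fix: when $n = 5$, (c) is $1^3 0 \concat 0 = 1110 \concat 0$; is $1110 \in \Ulam_4$? Yes! $1110 \in \Ulam_4$. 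And (b) is $11 \concat 100$; $100 \in \Ulam_3$? Yes. So $11100$ would have two decompositions. But directly: is $11100 \in \Ulam_5$? I would verify the base cases $n = 3, 4, 5, 6, 7, 8$ by hand against the claimed pattern ($n \equiv 0, 1$: $n = 4, 5, 8, 9$) — this hand-check is the first order of business and will either confirm the statement or reveal that decomposition (b) must actually be *excluded* for a subtle reason (e.g. $1 0^{n-3}$ might fail to be Ulam for a reason I'm glossing, or $11$ might coincide with something).

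The hard part, therefore, is getting the base cases exactly right and correctly identifying which of the three decompositions (a), (b), (c) are genuinely available — in particular pinning down the Ulam status of $1 0^{n-3}$ and the interaction with distinctness — so that the residue-by-residue count comes out to exactly $1$ on $n \equiv 0, 1 \pmod 4$ and $\ne 1$ otherwise. Once the three-way case analysis is correctly set up, the induction step is immediate: (a) is controlled by Theorem \ref{theorem: bade 2} (through complement and reverse symmetry), the relevant block factor's status by Theorem \ref{theorem: bade 1}, and (c) by the inductive hypothesis applied to the strictly shorter word $1^3 0^{n-4}$. I would present it as a short table

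$$
\begin{array}{c|ccc|c}
n \bmod 4 & \text{(a)} & \text{(b)} & \text{(c)} & \#\text{reps} \\ \hline
0 & \checkmark & ? & \times & ? \\
1 & \times & ? & \checkmark & ? \\
2 & \checkmark & ? & \checkmark & ? \\
3 & \times & ? & \times & ?
\end{array}
$$

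and fill in the "?" column for (b) by a careful direct argument, then read off that the count is $1$ exactly for $n \equiv 0, 1$. The main obstacle is entirely in this careful accounting of decomposition (b); everything else is routine given the cited theorems.
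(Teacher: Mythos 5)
Your setup is the same as the paper's---strong induction, enumerate the cut points of $1^30^{n-3}$, and control each candidate factor by Theorem \ref{theorem: bade 2}, Theorem \ref{theorem: bade 1}, and the inductive hypothesis---and your eliminations of most cuts are correct: $1^3 = 111 \notin \Ulam_3$ kills $1^3 \concat 0^{n-3}$, and $0^m \notin \Ulam$ for $m \geq 2$ restricts the right-hand cuts to $1^30^{n-4}\concat 0$ alone. But the proof as written is incomplete: it ends with an unresolved contradiction ($n \equiv 1 \pmod 4$ apparently giving two valid representations) and a table whose column for decomposition (b) is left as question marks, deferred to ``a careful direct argument'' that is never supplied. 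That column is the entire content of the lemma's proof that you are missing, so this is a genuine gap, not a presentational one.

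The resolution is one line, and it is the observation the paper leads with: decomposition (b), $1^2 \concat 10^{n-3}$, requires the \emph{left} factor $1^2 = 11$ to be Ulam, and it is not---$\Ulam_2 = \{01,10\}$ (listed in the introduction; indeed $11 = 1\concat 1$ forces $u_1 = u_2$, which is disallowed). You checked the Ulam status of the right factor $10^{n-3}$ carefully via Theorem \ref{theorem: bade 1} but never checked the left factor, even though you came within a hair of it (``or $11$ might coincide with something''). With (b) struck out, exactly two candidate representations remain, $1\concat 1^20^{n-3}$ (valid iff $n \equiv 0,2 \pmod 4$ by Theorem \ref{theorem: bade 2}) and $1^30^{n-4}\concat 0$ (valid iff $n \equiv 1,2 \pmod 4$ by induction), and exactly one of them is valid precisely when $n \equiv 0,1 \pmod 4$, which is the paper's argument verbatim.
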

\begin{proof}
We will use proof by induction, where the base cases $n=3,4,5,6$ can all be verified by direct computation. Assume the statement holds for all words of length strictly less than $n$, and consider the word $u=1^30^{n-3}$. The only two possible representations for $u$ are
    \begin{enumerate}
        \item $1\concat 1^20^{n-3}$ and
        \item $1^30^{n-4}\concat 0$,
    \end{enumerate}
since neither $1^a$ nor $0^a$ are Ulam words for any $a > 1$. By Theorem \ref{theorem: bade 2}, $1^2 0^{n-3} \in \Ulam$ if and only if $n = 0 \pmod 2$; thus, representation (1) is valid if and only if $n = 0,2 \pmod 4$. On the other hand, by the inductive hypothesis, representation (2) is valid if and only if $n = 1,2 \pmod 4$. Ergo, exactly one of the representations is valid if and only if $n = 0,1 \pmod 4$.
\end{proof}

The proof of Lemma \ref{lemma: sierpinski level 1} illustrates how the modular length restrictions for $1^{a+1}0^b$ can easily be found using the length restrictions for $1^{a}0^b$, which, in turn, means we could generate countless additional theorems, providing length restrictions for $1^40^b$, $1^50^b$, and so forth. However, this would quickly prove tedious. Instead, we will demonstrate the unifying pattern between all words of the form $1^a0^b$. Recursively define
    \begin{align*}
        S_0 :&= (2,1) \\
        S_{n + 1} :&= S_n \cup \left(S_n + (2^n,0)\right) \cup \left(S_n + (2^n,2^n)\right) \\
        \Sierp :&= \bigcup_{n = 0}^\infty S_n.
    \end{align*}
The set $\Sierp$ is sometimes referred to as the discrete Sierpi\'nski triangle. The reason for this is that if one considers a suitable limit of the sets $2^{-n}\Sierp$, the result is the usual Sierpi\'nski triangle. Remarkably, this is exactly the correct construction to determine whether a word $1^a 0^b$ is Ulam or not.

\begin{theorem}
    $$
    \left\{(x,y) \in \Z_{\geq 1}^2\middle|1^y 0^{x-y} \in \Ulam\right\} = (1,1) \cup \Sierp.
    $$
\end{theorem}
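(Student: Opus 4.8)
The plan is to prove the two inclusions separately by induction on the length $x = a+b$ of the word $1^a0^b$ (writing $y = a$, $x - y = b$). The key structural fact is that, just as in the proof of Lemma \ref{lemma: sierpinski level 1}, a word $1^a0^b$ with $a, b \geq 1$ has very few candidate factorizations into two distinct Ulam words: since neither $0^k$ nor $1^k$ is Ulam for $k \geq 2$, any factorization $1^a 0^b = u_1 \concat u_2$ must split either inside the block of ones or inside the block of zeros, i.e. $u_1 = 1^i$ with $u_2 = 1^{a-i}0^b$ (forcing $i = 1$), or $u_1 = 1^a 0^j$ with $u_2 = 0^{b-j}$ (forcing $b - j = 1$). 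So the only possible representations are $1 \concat 1^{a-1}0^b$ and $1^a 0^{b-1}\concat 0$. Thus $1^a 0^b \in \Ulam$ if and only if \emph{exactly one} of $1^{a-1}0^b \in \Ulam$ and $1^a 0^{b-1} \in \Ulam$ holds. This reduces everything to understanding the recursion $P(a,b) := [1^a0^b \in \Ulam]$ satisfies $P(a,b) = P(a-1,b) \oplus P(a,b-1)$ for $a, b \geq 1$ (XOR), together with boundary data $P(a,0)$ and $P(0,b)$, which vanish for $a \geq 2$ and $b \geq 2$ and equal $1$ for $a,b \leq 1$. The first step is to establish this recursion carefully, including the edge cases where $a = 1$ or $b = 1$ so that one of the sub-words lies on the boundary.

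Next I would identify this XOR recursion with the parity of binomial coefficients. The relation $P(a,b) = P(a-1,b)\oplus P(a,b-1)$ is exactly Pascal's rule mod $2$, so one expects $P(a,b) \equiv \binom{a+b-c}{a-d} \pmod 2$ for a suitable shift $(c,d)$ dictated by the boundary conditions; comparing against the known values $P(1,1)=1$, $P(2,0)=P(0,2)=0$, $P(2,1)=P(1,2)=1$ (from Lemma \ref{lemma: sierpinski level 1} and Theorem \ref{theorem: bade 2}) pins down the shift. Concretely I expect to show, by a second induction using the recursion just established, that for $a \geq 1$ and $b \geq 1$ one has $1^a0^b \in \Ulam \iff \binom{a+b-2}{a-1} \equiv 1 \pmod 2$ — note the appealing symmetry with Theorem \ref{theorem: bade 1}, which handles the single-$1$ case $0^a 1 0^b$ via $\binom{a+b}{a}$, essentially the $a'=1$ boundary slice. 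The induction is routine once the base row/column is checked against Theorem \ref{theorem: bade 2} and Lemma \ref{lemma: sierpinski level 1}.

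Finally I would translate the binomial condition into membership in $\Sierp$. By Kummer's theorem (or Lucas), $\binom{m}{k}$ is odd precisely when the binary digits of $k$ are dominated by those of $m$, i.e. $k \mathbin{\&} (m-k) = 0$ bitwise. Writing $m = a+b-2$, $k = a-1$, $m - k = b - 1$, the condition becomes $(a-1) \mathbin{\&} (b-1) = 0$. On the other hand, a short induction on $n$ shows that $S_n = \{(2 + s, 1 + t) : s, t \geq 0,\ s + t < 2^n,\ s \mathbin{\&} t = 0\}$ — the two translates by $(2^n,0)$ and $(2^n,2^n)$ in the recursive definition correspond exactly to setting the new high bit of $s$ alone, or of both $s$ and $t$, which preserves disjointness of the bit supports — so $\Sierp = \{(x,y) : x - 2 \geq 0,\ y - 1 \geq 0,\ (x - y - 1)\mathbin{\&}(y - 1) = 0\}$ after the substitution $s = x - y - 1$, $t = y - 1$. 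Hence $(x,y) \in \Sierp$ with $x \geq 2$, $y \geq 1$ iff $(y-1)\mathbin{\&}(x-y-1) = 0$ iff $1^y 0^{x-y} \in \Ulam$, and one checks separately that the only solution with $x = 1$ is the isolated point $(1,1)$ (where $1^1 0^0 = 1 \in \Ulam$), which is adjoined.

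The main obstacle I anticipate is not the combinatorial identification with $\Sierp$ — that is bookkeeping — but rather nailing down the recursion cleanly at the boundary: when $b = 1$, the factorization $1^a 0^0 \concat 0 = 1^a \concat 0$ involves the word $1^a$, which is \emph{not} Ulam for $a \geq 2$, so the "other" representation simply does not exist and one must be careful that the XOR statement $P(a,1) = P(a-1,1)\oplus P(a,0)$ still reads correctly (with $P(a,0) = 0$ for $a \geq 2$). Likewise the corner cases $a \in \{1,2\}$, $b \in \{1,2\}$ must be checked by hand against Theorem \ref{theorem: bade 2}, Lemma \ref{lemma: sierpinski level 1}, and their reverses, to seat the induction. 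Once those are secured, everything propagates by the two nested inductions.
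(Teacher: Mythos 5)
Your approach is essentially the paper's: both hinge on the observation that $1^a0^b$ (with $a,b\geq 1$) admits only the two candidate factorizations $1\concat 1^{a-1}0^b$ and $1^a0^{b-1}\concat 0$, so that membership in $\Ulam$ obeys the XOR/Pascal recursion $P(a,b)=P(a-1,b)\oplus P(a,b-1)$, and both then identify the resulting array with Pascal's triangle mod $2$ anchored at $(2,1)$. The paper stops there, citing the identification of the odd entries of Pascal's triangle with the discrete Sierpi\'nski triangle as well known; you supply that step explicitly via Lucas/Kummer and a bitwise description of $S_n$, which is a reasonable (if longer) way to finish. One slip to fix in that last step: your closed form $S_n=\{(2+s,1+t): s+t<2^n,\ s\mathbin{\&}t=0\}$ is not consistent with your own substitution $s=x-y-1$, $t=y-1$, and is in fact wrong --- for instance $(3,2)\in S_1$ (the word $110\in\Ulam_3$), but under your parametrization it corresponds to $s=t=1$ with $s\mathbin{\&}t\neq 0$. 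The correct statement is $S_n=\{(2+s+t,\,1+t): 0\leq s,t<2^n,\ s\mathbin{\&}t=0\}$: in these coordinates the translate by $(2^n,0)$ sets the new high bit of $s$ alone and the translate by $(2^n,2^n)$ sets it in $t$ alone (never in both, which is exactly what preserves $s\mathbin{\&}t=0$), rather than ``of both $s$ and $t$'' as you wrote. With that correction your final condition $(x-y-1)\mathbin{\&}(y-1)=0$ is right, it matches the binomial criterion $\binom{x-2}{y-1}$ odd, and the argument goes through.
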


\begin{remark}
Observe that this is Theorem \ref{theorem: sierpinski}, but stated more precisely.
\end{remark}

\begin{proof}
It is easy to check that the only elements in both sets with $x \leq 2$ are $(1,1)$ and $(2,1)$. Our goal is to show that the iterative process for producing points in $\Sierp$ with larger $x$ values is the same as for the first set. To do so, we will use a modified version of Pascal's triangle, achieved by aligning all entries to the left and then rotating the resulting image counterclockwise 90 degrees.
    \begin{center}
        \begin{tikzpicture}[scale=0.75]
            \foreach\x in {0,...,4}{
            \node at (\x/2,-\x) {1};
            \node at (-\x/2,-\x) {1};
            \draw[thick, black] (\x/2-0.25,-\x-0.25) rectangle (\x/2+0.25,-\x+0.25);
            \draw[thick, black] (-\x/2-0.25,-\x-0.25) rectangle (-\x/2+0.25,-\x+0.25);
            }
            \foreach\x in {2,3,4}{
            \node at (\x/2-1,-\x) {\x};
            \node at (-\x/2+1,-\x) {\x};
            }
            \node at (0,-4) {6};
            \draw[thick, black] (-0.75,-3.25) rectangle (-0.25,-2.75);
            \draw[thick, black] (0.75,-3.25) rectangle (0.25,-2.75);
            \node at (0,-4.5) {\vdots};
            \node [rotate=25] at (2.25,-4.5) {\vdots};
            \node [rotate=-25] at (-2.25,-4.5) {\vdots};
            \node at (3,-2.5) {$\Rightarrow$};
        \end{tikzpicture}
        \begin{tikzpicture}[scale=0.75]
            \node at (-1,0) {};
            \foreach\x in {0,...,4}{
            \node at (0,-\x) {1};
            \node at (\x,-\x) {1};
            \draw[thick, black] (-0.25,-\x-0.25) rectangle (0.25,-\x+0.25);
            \draw[thick, black] (\x-0.25,-\x-0.25) rectangle (\x+0.25,-\x+0.25);
            }
            \foreach\x in {2,3,4}{
            \node at (1,-\x) {\x};
            \node at (\x-1,-\x) {\x};
            }
            \node at (2,-4) {6};
            \draw[thick, black] (0.75,-3.25) rectangle (1.25,-2.75);
            \draw[thick, black] (1.75,-3.25) rectangle (2.25,-2.75);
            \node at (0,-4.5) {\vdots};
            \node [rotate=45] at (4.5,-4.5) {\vdots};
            \node at (5,-2.5) {$\Rightarrow$};
        \end{tikzpicture}
        \begin{tikzpicture}[rotate=90, scale=0.75]
            \foreach\x in {0,...,4}{
            \node at (0,-\x) {1};
            \node at (\x,-\x) {1};
            \draw[thick, black] (-0.25,-\x-0.25) rectangle (0.25,-\x+0.25);
            \draw[thick, black] (\x-0.25,-\x-0.25) rectangle (\x+0.25,-\x+0.25);
            }
            \foreach\x in {2,3,4}{
            \node at (1,-\x) {\x};
            \node at (\x-1,-\x) {\x};
            }
            \node at (2,-4) {6};
            \draw[thick, black] (0.75,-3.25) rectangle (1.25,-2.75);
            \draw[thick, black] (1.75,-3.25) rectangle (2.25,-2.75);
            \node at (0,-4.75) {\dots};
            \node [rotate=-45] at (4.75,-4.75) {\vdots};
            \node at (-0.75,0) {};
        \end{tikzpicture}
    \end{center}
    In the modified version, if we let $p(x,y)$ be the entry in row $y$ and column $x$, we have 
    $$
        p(x,y)=p(x-1,y)+p(x-1,y-1).
    $$
    We make two additional modifications:
        \begin{enumerate}
            \item we align Pascal's triangle such that the bottom left entry occurs at $(2,1)$ and
            \item we shade all odd entries.
        \end{enumerate}
    On the one hand, this is well-known to be the discrete Sierpi\'nski triangle $\Sierp$. On the other hand, because only odd entries are shaded, an entry $(x,y)$ is shaded if and only if exactly one of $(x-1,y)$ or $(x-1,y-1)$ is shaded.

    Now, observe that there are only two potential representations for $1^x 0^{x-y}$, those being
        \begin{enumerate}
            \item ${1^y0^{x-y-1}}\concat 0$ and
            \item $1\concat1^{y-1}0^{x-y}$.
        \end{enumerate}
    Thus, $(x,y)$ is in the desired set if and only if exactly one of $(x-1,y)$, $(x-1,y-1)$ is in the set.
\end{proof}

\begin{remark}
This is not the first time that Pascal's triangle and the Sierpi\'nski triangle have shown up in the study of Ulam words. As mentioned previously, Mandelshtam found an analogous pattern for words $0^x10^y1^z$ \cite{Mandelshtam_2022}; even before that, Bade et al. proved that the number of words of length $n$ with one $1$ is the $n$-th term in Gould's sequence\cite{BCLL_2020}---that is, the number of elements in the $(n-1)$-th row of Pascal's triangle.
\end{remark}

There are many consequences of this result. First, it means that determining the set of Ulam words of the form $1^a 0^b$ is quite simple: it is a straightforward iterative procedure. Counting the number of elements of such form is also simple. For example, consider the following simple corollary.

\begin{corollary}\label{corollary: Gould count}
Fix $n \in \Z_{\geq 2}$; let $L_n$ be the set of $a \in \Z_{\geq 1}$ such that $0^a 1^{n-a} \in \Ulam$. Then $\# L_n = G(n)$.
\end{corollary}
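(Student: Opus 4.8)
The plan is to read the count off from Theorem~\ref{theorem: sierpinski} together with the fact that $\Ulam$ is closed under complementation. The word $0^a1^{n-a}$ only makes sense for $a\le n$, and $a=n$ gives $0^n\notin\Ulam$ since $n\ge 2$, so $L_n\subseteq\{1,\dots,n-1\}$. For $1\le a\le n-1$, the complement of $0^a1^{n-a}$ is $1^a0^{n-a}$, so $0^a1^{n-a}\in\Ulam$ if and only if $1^a0^{n-a}\in\Ulam$; writing $1^a0^{n-a}=1^y0^{x-y}$ with $x=n$ and $y=a$, Theorem~\ref{theorem: sierpinski} says this holds if and only if $(n,a)\in(1,1)\cup\Sierp$. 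Since $n\ge 2$, the point $(1,1)$ does not lie on the line $x=n$, so
    $$
    L_n=\left\{\,a\in\Z_{\ge 1}\;\middle|\;(n,a)\in\Sierp\,\right\},
    $$
and $\#L_n$ is precisely the number of lattice points of the discrete Sierpi\'nski triangle lying on the vertical line $x=n$.

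To count that slice, I would invoke the description of $\Sierp$ from the proof of Theorem~\ref{theorem: sierpinski}: with the bottom-left cell at $(2,1)$ and the recurrence $p(x,y)=p(x-1,y)+p(x-1,y-1)$, the cell $(x,y)$ carries the binomial coefficient $p(x,y)=\binom{x-2}{y-1}$, and it belongs to $\Sierp$ exactly when that coefficient is odd. Hence
    $$
    \#L_n=\#\left\{\,k\in\{0,1,\dots,n-2\}\;\middle|\;\binom{n-2}{k}\text{ is odd}\,\right\},
    $$
which is the number of odd entries in row $n-2$ of Pascal's triangle, i.e.\ $2^{\#_1(n-2)}$ by the remark following Theorem~\ref{theorem: bade 4}. (If one wanted to bypass Pascal's triangle, the same count follows by inducting on $n$ directly: the only two representations of $0^a1^{n-a}$ as a concatenation of two Ulam words are $0\concat 0^{a-1}1^{n-a}$ and $0^a1^{n-a-1}\concat 1$, exactly as in the proofs of Lemmas~\ref{lemma: sierpinski level 1} and~\ref{lemma: four ones}, and these propagate the count along the chain $S_0\subseteq S_1\subseteq\cdots$.)

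Since the corollary is essentially a specialization of Theorem~\ref{theorem: sierpinski}, there is no substantive obstacle; the one point requiring genuine care is the index bookkeeping, namely the offset of $2$ between the word length $n$ (equivalently, the smallest first coordinate occurring in $\Sierp$, which is $2$) and the Pascal row $n-2$ it corresponds to, since this determines which term of Gould's sequence appears. Carrying this through carefully gives $\#L_n=2^{\#_1(n-2)}=G(n-2)$; as a sanity check, $L_6=\{1,5\}$, so $\#L_6=2=G(4)$. Before finalizing, I would reconcile this with the convention intended for $G$ in the statement of the corollary.
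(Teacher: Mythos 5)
Your proof takes the same route as the paper's: specialize Theorem~\ref{theorem: sierpinski} to the vertical slice $x=n$ of $\Sierp$ (after passing to complements to convert $0^a1^{n-a}$ into $1^a0^{n-a}$) and count the odd entries in the corresponding row of Pascal's triangle. Your bookkeeping is correct, and it exposes a genuine indexing slip in the stated corollary: the count is $G(n-2)$, not $G(n)$. For instance, $L_2=\{1\}$ (only $01\in\Ulam_2$), so $\#L_2=1=G(0)$ whereas $G(2)=2$; likewise your sanity check $\#L_6=2=G(4)$ is right, while $G(6)=4$. The paper's own two-sentence proof asserts that ``the $k$-th row corresponds to $L_{k+1}$,'' which would give $\#L_n=G(n-1)$ --- a third answer, consistent neither with the statement nor with the data. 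So the one point you deferred (reconciling the convention for $G$) is not a gap in your argument but an off-by-two error in the corollary as printed; your derivation $\#L_n=2^{\#_1(n-2)}=G(n-2)$ is the correct version, and the rest of your argument, including the identification $p(x,y)=\binom{x-2}{y-1}$ for the shifted Pascal array, is sound.
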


\begin{proof}
The $k$-th term of Gould's sequence is the number of odd entries in the $k$-th row of Pascal's triangle. After rotating and shifting, the $k$-th row corresponds to $L_{k + 1}$.
\end{proof}

There are certainly more symmetries lurking within $\mathcal{S}$ that would lead to more theorems and patterns. In particular, we believe that it might be possible to demonstrate the following.
\begin{conjecture}
    Let $u$ be a word of length $n$ of the form $0^a1^{2^k}0^b$ for $a,b,k\in\mathbb{Z}_{\geq1}$. The word $u$ is in $\Ulam$ if and only if $n=1 \mod 2^k$.
\end{conjecture}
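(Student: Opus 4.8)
The plan is to fix $k \geq 1$ and prove the equivalent statement that for all $a, b \geq 1$, $0^a 1^{2^k} 0^b \in \Ulam$ if and only if $a + b \equiv 1 \pmod{2^k}$ (which matches $n \equiv 1 \pmod{2^k}$, since $2^k \equiv 0$), by strong induction on $a + b$; this subsumes the known cases $k = 1, 2$, namely Theorem \ref{theorem: bade 2} and Lemma \ref{lemma: four ones}. The first move is to enumerate the factorizations $w = u_1 \concat u_2$ of $w = 0^a 1^{2^k} 0^b$: because $0^m \notin \Ulam$ for $m \geq 2$, the only candidates with both halves Ulam are the \emph{front} split $0 \concat \bigl(0^{a-1} 1^{2^k} 0^b\bigr)$, the \emph{back} split $\bigl(0^a 1^{2^k} 0^{b-1}\bigr) \concat 0$, and the $2^k - 1$ \emph{middle} splits $\bigl(0^a 1^j\bigr) \concat \bigl(1^{2^k - j} 0^b\bigr)$ for $1 \leq j \leq 2^k - 1$ (whose halves automatically differ, one beginning with a $0$ and the other with a $1$). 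Thus $w \in \Ulam$ exactly when precisely one of these $2^k + 1$ factorizations has both halves Ulam.

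Next I would dispose of the front and back splits. When $a \geq 2$, the front split is valid iff $0^{a-1} 1^{2^k} 0^b \in \Ulam$, i.e., by the inductive hypothesis, iff $a + b \equiv 2 \pmod{2^k}$; when $a = 1$, it is valid iff $1^{2^k} 0^b \in \Ulam$, and here I would invoke Theorem \ref{theorem: sierpinski} — in particular the identification, made in its proof, of $\Sierp$ with the odd entries of a suitably shifted Pascal triangle, so that $(x,y) \in \Sierp$ if and only if $\binom{x-2}{y-1}$ is odd — together with the standard carry criterion for binomial parity (Kummer's theorem: $\binom{r+s}{s}$ is odd iff adding $r$ and $s$ in base $2$ produces no carries, equivalently iff $r$ and $s$ have disjoint binary-digit supports), to reduce this once more to $a + b \equiv 2 \pmod{2^k}$. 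The back split is symmetric (apply the reverse map). Hence the front and back splits are valid simultaneously precisely when $a + b \equiv 2 \pmod{2^k}$, contributing $0$ or $2$ to the count.

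Then I would count the valid middle splits. Reversing one half, $0^a 1^j \in \Ulam \iff 1^j 0^a \in \Ulam \iff \binom{a + j - 2}{j - 1}$ is odd, and similarly for the other half; so, writing $i = j - 1$, the $j$-th middle split is valid iff $\binom{a - 1 + i}{i}$ and $\binom{b - 1 + (2^k - 2 - i)}{2^k - 2 - i}$ are both odd. Using $\sum_{m \geq 0} \binom{c - 1 + m}{m} x^m = (1 - x)^{-c}$ and reducing coefficients modulo $2$, where $(1 - x)^{-a} (1 - x)^{-b} = (1 - x)^{-(a + b)}$ in $\mathbb{F}_2[[x]]$, the number $M$ of valid middle splits satisfies $M \equiv \binom{a + b + 2^k - 3}{2^k - 2} \pmod 2$; the carry criterion then says this is odd exactly when the binary expansion of $a + b - 1$ has zeros in positions $1, \dots, k - 1$, i.e., exactly when $a + b \equiv 1$ or $2 \pmod{2^k}$.

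It remains to assemble the cases. If $a + b \not\equiv 1, 2 \pmod{2^k}$, then $M$ is even and the front/back contribution is $0$, so the count is even and $w \notin \Ulam$; if $a + b \equiv 2 \pmod{2^k}$, the count is $M + 2 \geq 2$, so $w \notin \Ulam$. The remaining case $a + b \equiv 1 \pmod{2^k}$ is where the real content sits, and is the step I expect to be the main obstacle: the front/back contribution is now $0$, so one must show $M$ is \emph{exactly} $1$ — something the mod-$2$ computation does not deliver. I would argue this directly. Set $A := (a - 1) \bmod 2^k$ and $B := (b - 1) \bmod 2^k$; then $a + b \equiv 1$ yields $A + B \equiv 2^k - 1 \pmod{2^k}$, and since $A + B \leq 2^{k+1} - 2$ this forces $A + B = 2^k - 1$ exactly, so $A$ and $B$, viewed through their binary-digit supports, partition the bit positions $\{0, 1, \dots, k - 1\}$. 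A valid middle index $i$ must then be a submask of $B$ with $2^k - 2 - i$ a submask of $A$; disjointness of $A$ and $B$ makes that addition carry-free, which pins $i$ down uniquely as $B$ with its bit $0$ cleared, and one checks the resulting $i$ lies in $[0, 2^k - 2]$. Hence $M = 1$ and $w \in \Ulam$. Finally, the base case $a = b = 1$ needs no separate treatment: there the recursion is never triggered — the front, back, and middle splits are all resolved directly via Theorem \ref{theorem: sierpinski} — and the induction closes.
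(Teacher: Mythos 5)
This statement is only a \emph{conjecture} in the paper---the authors write that they ``believe it might be possible to demonstrate'' it---so there is no proof of record to compare against; what you have written appears to be a correct and complete proof. Your enumeration of the $2^k+1$ candidate factorizations is exhaustive (any split inside a $0$-block other than at the outermost letter fails because $0^m\notin\Ulam$ for $m\geq 2$), and reducing every half-word to the binomial-parity criterion implicit in the proof of Theorem \ref{theorem: sierpinski} (namely $1^y0^{x-y}\in\Ulam$ for $x\geq 2$ iff $\binom{x-2}{y-1}$ is odd) is exactly what lets you replace the finite case tables of Theorem \ref{theorem: bade 2} and Lemma \ref{lemma: four ones} with one uniform Vandermonde/Kummer computation. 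I verified the identity $M\equiv\binom{a+b+2^k-3}{2^k-2}\pmod 2$, its consequence that $M$ is odd iff $a+b\equiv 1,2\pmod{2^k}$, and the key step that $M=1$ when $a+b\equiv 1\pmod{2^k}$: there $A+B=2^k-1$ forces $\mathrm{supp}(A)\sqcup\mathrm{supp}(B)=\{0,\dots,k-1\}$, and a valid index must satisfy $\mathrm{supp}(i)\subseteq\mathrm{supp}(B)$, $\mathrm{supp}(2^k-2-i)\subseteq\mathrm{supp}(A)$, and $\mathrm{supp}(i)\sqcup\mathrm{supp}(2^k-2-i)=\{1,\dots,k-1\}$, which pins $\mathrm{supp}(i)=\mathrm{supp}(B)\cap\{1,\dots,k-1\}$ uniquely. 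Two points to tighten when writing this up: (i) you argue uniqueness of the valid middle index but should also record existence---either check directly that $i=B$ with bit $0$ cleared satisfies both submask conditions, or simply observe that the parity computation already gives $M$ odd, hence $M\geq 1$; (ii) do not calibrate your middle-split conditions against the table in the paper's proof of Lemma \ref{lemma: four ones}: that table records the validity of its representations (2) and (4) with the congruence conditions complementary to what Lemma \ref{lemma: sierpinski level 1} actually yields ($1^30^l\in\Ulam$ iff $l\equiv 1,2\pmod 4$, not $0,3$), so several of its entries are off even though the locations of the unique representations---the only thing the lemma needs---come out the same. Written out in full, your argument settles the conjecture.
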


We close by giving one more result, which is analogous to Theorem \ref{theorem: bade 3}.

\begin{theorem}
For any $a,b \in \Z$ such that $a + b \geq 1$, $0^a 10101 0^b \in \Ulam$ if and only if $a,b \in 2\Z$ and one is zero.
\end{theorem}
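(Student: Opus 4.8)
\textit{Proof proposal.} The natural approach is a strong induction on the length $n = a + b + 5$, in the spirit of the proofs of Lemma \ref{lemma: four ones} and Lemma \ref{lemma: sierpinski level 1}. One first verifies directly all cases with $a + b \leq 3$, together with the auxiliary fact that the degenerate word $10101$ is \emph{not} Ulam (one checks it has no valid factorization, using $\Ulam_3$ and $\Ulam_4$). This last point matters because the ``one is zero'' criterion, read naively at $(a,b) = (0,0)$, would wrongly predict $10101 \in \Ulam$; so $(0,0)$ is a genuine exception that must be excluded by hand. For $a + b \geq 4$ one fixes $W = 0^a 10101 0^b$ and enumerates all factorizations $W = u_1 \concat u_2$ into two nonempty words. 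Since $0^k \in \Ulam$ only for $k = 1$, the only cuts in which both halves can possibly be Ulam are: stripping a single $0$ from the left (available when $a \geq 1$), stripping a single $0$ from the right (available when $b \geq 1$), and the four ``internal'' cuts $0^a 1 \concat 0101\, 0^b$, $0^a 10 \concat 101\, 0^b$, $0^a 101 \concat 01\, 0^b$, $0^a 1010 \concat 1\, 0^b$. One should also note that $u_1 \neq u_2$ is automatic, since $W$ contains an odd number of $1$'s and hence is not a square.

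Next I would decide, for each of these (at most) six candidate factorizations, exactly when both halves lie in $\Ulam$. The two ``strip a zero'' cuts are controlled by the inductive hypothesis applied to $0^{a-1} 10101\, 0^b$ and $0^a 10101\, 0^{b-1}$, whose parameter sum is $a + b - 1 \geq 3 \geq 1$, so the hypothesis applies with no edge cases. For the four internal cuts, every half has the form $0^x 1 0^y$ or $0^x 101 0^y$: the left halves are $0^a 1$, $0^a 10$, $0^a 101$, $0^a 1010$, and the right halves are $0101\, 0^b = 0^1 101 0^b$, $101\, 0^b = 0^0 101 0^b$, $01\, 0^b = 0^1 1 0^b$, $1\, 0^b = 0^0 1 0^b$. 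Theorem \ref{theorem: bade 1} and Theorem \ref{theorem: bade 3} then settle membership in $\Ulam$ for each, the only caveats being the small words $101$, $0101$, $1010$ (where the hypothesis $a + b \geq 2$ of Theorem \ref{theorem: bade 3} fails), which are handled by the explicit lists $\Ulam_3, \Ulam_4$ from the introduction. The outcome is that each internal cut is valid precisely according to the parity of $a$ (resp. $b$) and whether $a$ (resp. $b$) equals $0$, equals $1$, equals $2$, or is $\geq 3$.

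Finally I would assemble the count. Splitting on the residues of $a$ and $b$ modulo $2$ and on the boundary regimes just mentioned, one tabulates how many of the six factorizations are valid, using the inductive hypothesis for the two ``strip a zero'' contributions. The claim is that the total is exactly $1$ when $a, b$ are both even with at least one equal to $0$ --- for instance, when $a = 0$ and $b \geq 2$ is even, only $0^0 1 \concat 0101\, 0^b$ survives; when $b = 0$ and $a \geq 2$ is even, only $0^a 1010 \concat 1\, 0^b$ survives --- and is $0$ or $2$ in every other case --- for instance, when $a, b \geq 2$ are both even both of $0^a 1 \concat 0101\, 0^b$ and $0^a 1010 \concat 1\, 0^b$ survive, and when $a = 1$ and $b \geq 2$ is even the left-strip cut and $0^a 1 \concat 0101\, 0^b$ both survive. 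Since a word is Ulam exactly when this count equals $1$, the induction closes; the complement symmetry $w \in \Ulam \iff \overline{w} \in \Ulam$ can be used to halve the casework by assuming $a \leq b$.

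The substantive difficulty here is organizational rather than conceptual: there are enough boundary regimes --- small $a$, small $b$, and the parity crossings where Theorem \ref{theorem: bade 3} needs its $a + b \geq 2$ hypothesis --- that the case table must be written out carefully so that no valid factorization is overlooked or double-counted. Keeping track of when a ``strip a zero'' step lands on a word still covered by the inductive hypothesis (as opposed to the excluded degenerate word $10101$) is the single place where one is most likely to slip.
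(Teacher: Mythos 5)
Your proposal is correct and follows essentially the same route as the paper's proof: induction on $a+b$, enumeration of the six factorizations (three when one exponent vanishes), and membership tests via Theorems \ref{theorem: bade 1} and \ref{theorem: bade 3} together with the inductive hypothesis, and your sample tallies (e.g.\ exactly one valid cut when $a=0$ and $b$ is even, two valid cuts when $a,b\geq 2$ are both even) agree with the paper's. The only difference is cosmetic: the paper takes base cases $a+b\in\{1,2\}$ and reduces the boundary case to $a=0$ via the reverse map, so the degenerate word $10101$ never enters the induction and needs no separate exclusion.
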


\begin{proof}
We induct on $a + b$. The base cases $a + b = 1,2$ are easily established by pure computation. First, assume that both $a,b \neq 0$. Then there are six possible representations:
    \begin{enumerate}
        \item $0\concat 0^{a-1}101010^{b}$,
        \item $0^a1\concat 01010^b$,
        \item $0^a10\concat 1010^b$,
        \item $0^a101\concat 010^b$,
        \item $0^a1010\concat 10^b$, and
        \item $0^a101010^{b-1}\concat 0$.
    \end{enumerate}
Applying the inductive hypothesis and Theorem \ref{theorem: bade 3}, we find that in all cases either none of these are valid representations, or multiple are simultaneously.

This leaves the case where either $a = 0$ or $b = 0$---without loss of generality, we assume that $a = 0$, since we can always apply the reverse map if needed. We have three possible representations:
    \begin{enumerate}
        \item $1\concat 01010^b$,
        \item $10\concat 1010^b$, and
        \item $101010^{b-1}\concat 0$.
    \end{enumerate}
If $b$ is odd, then (2) and (3) are valid representations due to Theorem \ref{theorem: bade 3}; therefore, this word is not Ulam. If $b$ is even, then (1) is a valid representation, but (2) and (3) are not; therefore, this word is Ulam.
\end{proof}

\section{Density and the Distribution of the Gaps}\label{section: archimedean equidistribution}

We begin by proving that there is a relationship between the size of the average gap between consecutive Ulam words of length $n$, and the density of Ulam words.

\begin{proof}[Proof of Theorem \ref{theorem: tie between conjectures}]
Let $u_1 < u_2 < \ldots < u_{k_n}$ be the (ordered) elements of $\pi(\Ulam_n)$---that is, the integers that are images of the Ulam words of length $n$. The gaps between them are the consecutive differences $g_1 = u_2 - u_1$, $g_2 = u_3 - u_2$, and so on. It is easy to see that $u_1 = \pi(0000\ldots 01) = 1$ and $u_{k_n} = \pi(1111\ldots 10) = 2^n - 2$---this follows from Theorem \ref{theorem: bade 4}, for example. Ergo, the average gap between Ulam words of length $n$ is
    \begin{align*}
        \mu_g(n) := \frac{g_1 + g_2 + \ldots + g_{k_n - 1}}{k_n - 1} &= \frac{u_{k_n} - u_1}{k_n - 1} = \frac{2^n - 3}{k_n - 1} \\
        &= \frac{1}{\rho(n)}\frac{k_n}{k_n - 1} - \frac{3}{k_n + 1},
    \end{align*}
from which we conclude that $\rho(n)^{-1} \asymp \mu_g(n)$. (We already know from Theorem \ref{theorem: linear growth rate} that $k_n \rightarrow\infty$ as $n\rightarrow\infty$.) In particular, if Conjecture \ref{conjecture: gap distribution} is true and $\mu_g(n) \asymp n^{3/10}$, then $\rho(n) \asymp n^{-3/10}$, which is Conjecture \ref{conjecture: growth rate}.
\end{proof}

Notice in particular that for $\rho(n)$ to converge to a non-zero constant, it must be that $\mu_g(n)$ is bounded. But this does not appear to be the case, as evidenced by Figures \ref{fig:average gap} and \ref{fig:relative error}. Instead, as near as we can tell, the order of growth seems to be around $n^{3/10}$. With this in mind, it is perhaps worthwhile to examine the distribution of the gaps more closely.

\begin{figure}[t]
    \centering
    \includegraphics[width=0.5\linewidth]{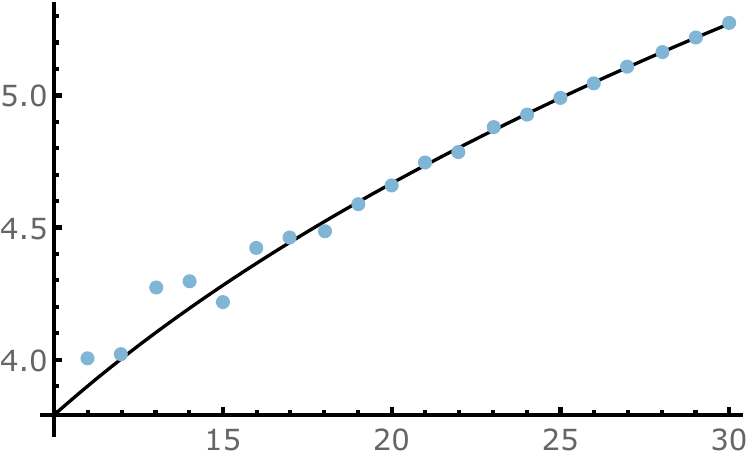}
    \caption{A plot of $\mu_g(n)$ for $11 \leq n \leq 30$ and $f(n) = 1.9 n^{3/10}$.}
    \label{fig:average gap}
\end{figure}

\begin{figure}[t]
    \centering
    \begin{minipage}{0.3\textwidth}
        \begin{tabular}{l|l}
        n & \text{Relative Error} \\ \hline
        13 & 4.08765\% \\
        14 & 2.43579\% \\ 
        15 & 1.54247\% \\ 
        16 & 1.27957\% \\ 
        17 & 0.402629\% \\
        18 & 0.87545\%
    \end{tabular}
    \end{minipage}%
    \begin{minipage}{0.3\textwidth}
        \begin{tabular}{l|l}
        n & \text{Relative Error} \\ \hline
        19 & 0.196991\% \\ 
        20 & 0.222403\% \\ 
        21 & 0.24375\% \\ 
        22 & 0.332677\% \\ 
        23 & 0.33529\% \\
        24 & 0.104678\%
    \end{tabular}
    \end{minipage}%
    \begin{minipage}{0.3\textwidth}
        \begin{tabular}{l|l}
         n & \text{Relative Error} \\ \hline
         25 & 0.0447467\% \\ 
         26 & 0.0981692\% \\ 
         27 & 0.028047\% \\ 
         28 & 0.0353147\% \\ 
         29 & 0.0365254\% \\ 
         30 & 0.00663021\%
    \end{tabular}
    \end{minipage}
    
    \caption{The relative error between the actual $\mu_g(n)$ and the estimate $1.9n^{3/10}$.}
    \label{fig:relative error}
\end{figure}

\begin{figure}[t]
    \centering
    \includegraphics[width=0.5\linewidth]{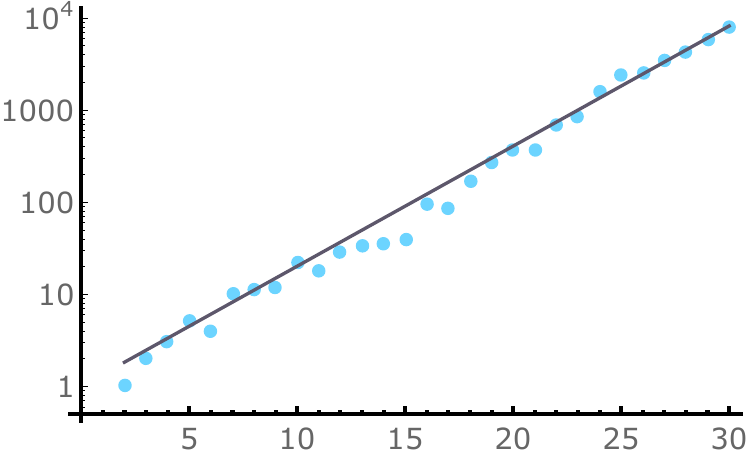}
    \caption{A logarithmic plot of the maximal gap between words of length $n$ for $2 \leq n \leq 30$, and $f(n) = 1.35^n$.}
    \label{fig:maximal gap}
\end{figure}

It is obvious that any gap is at least $1$ (in fact, it is an easy exercise to show that a gap of $1$ is always attained); on the other hand, numerical evidence suggests that the maximal gap grows exponentially---specifically, it is $O(r^n)$ for some constant $r \approx 1.35$ (see Figure \ref{fig:maximal gap}). This is a little strange, in that one would not immediately guess this looking at the probability distributions, as in Figure \ref{fig:gap_distributions}.

\begin{figure}[t]
    \centering
    \begin{tabular}{ccc}
         \includegraphics[width=0.305\linewidth]{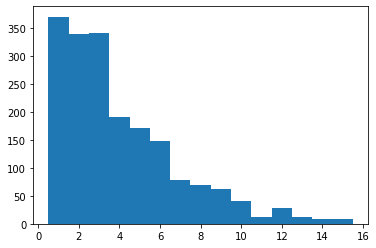} & \includegraphics[width=0.305\linewidth]{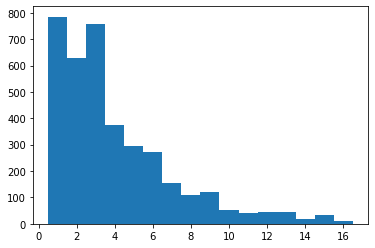} & \includegraphics[width=0.305\linewidth]{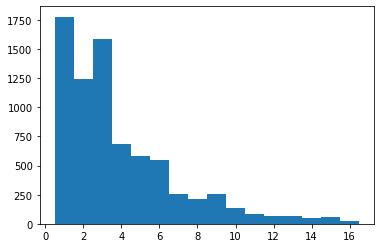} \\
         \includegraphics[width=0.305\linewidth]{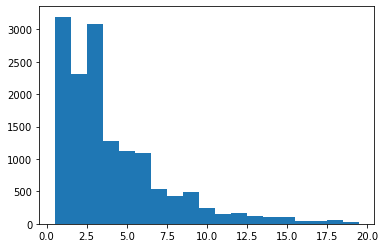} & \includegraphics[width=0.305\linewidth]{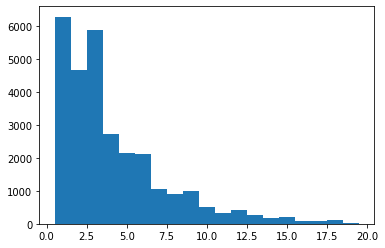} & \includegraphics[width=0.305\linewidth]{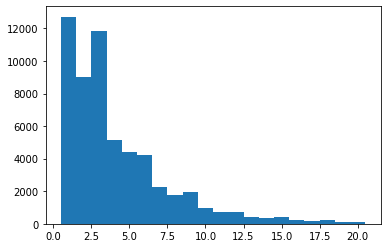} \\
         \includegraphics[width=0.305\linewidth]{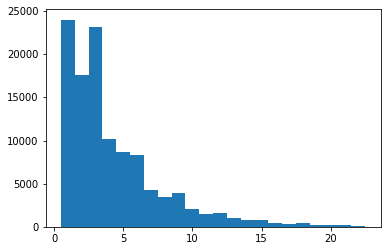} & \includegraphics[width=0.305\linewidth]{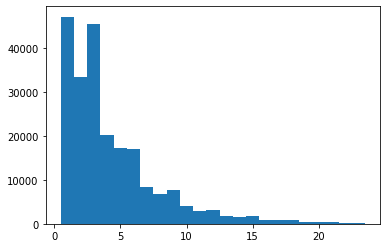} & \includegraphics[width=0.305\linewidth]{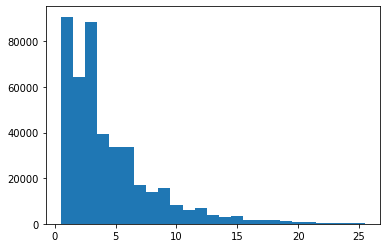} \\
         \includegraphics[width=0.305\linewidth]{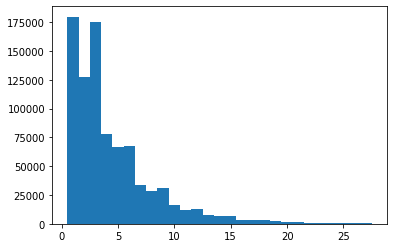} & \includegraphics[width=0.305\linewidth]{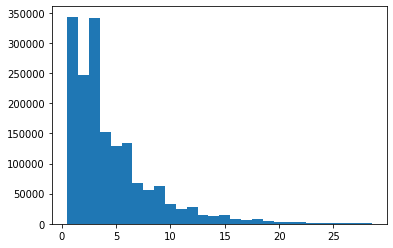} & \includegraphics[width=0.305\linewidth]{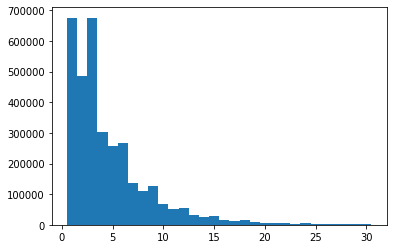} \\
         \includegraphics[width=0.305\linewidth]{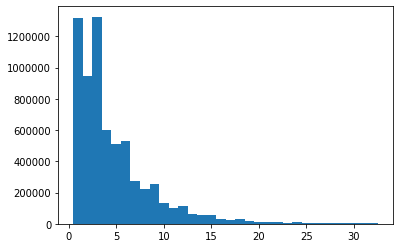} & \includegraphics[width=0.305\linewidth]{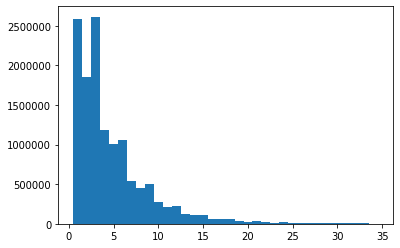} & \includegraphics[width=0.305\linewidth]{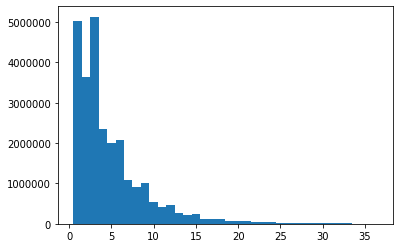} \\
         \includegraphics[width=0.305\linewidth]{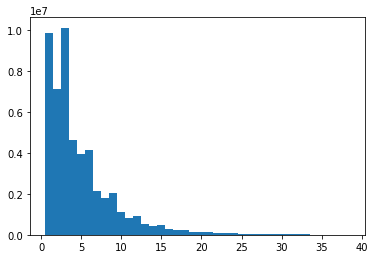} & \includegraphics[width=0.305\linewidth]{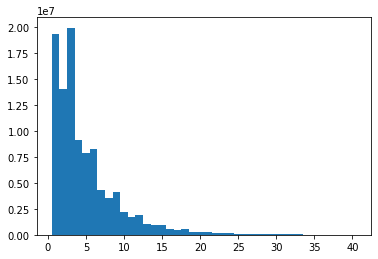} & \includegraphics[width=0.305\linewidth]{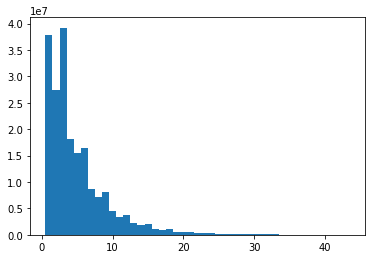}
    \end{tabular}
    
    \caption{From left to right, top to bottom: bar graphs showing the frequency of gaps of various sizes between consecutive words in $\Ulam_n$ for $n = 13,\ldots,30$, shown out to $4$ standard deviations.}
    \label{fig:gap_distributions}
\end{figure}

Indeed, there are some curious details to this apparent distribution. The first is that there is a clear bias against gaps that are either $2$ or $4$ modulo $6$. This is very odd, considering that we conjecture that Ulam words are equidistributed modulo $6$ in the limit. (See Section \ref{section: non-archimedean equidistribution} for more about this.) The second is that just as the average gap appears to grow without bound, so does the standard deviation. However, the standard deviations are quite small, of order around $n/3$---here is a table of the last few we were able to compute:
    $$
    \begin{array}{l|l}
        n & \text{Standard Deviation} \\ \hline
        21 & 6.09043461 \\
        22 & 6.57391412 \\
        23 & 6.95198536 \\
        24 & 7.48652894 \\
        25 & 7.95451379 \\
        26 & 8.41026105 \\
        27 & 8.83842107 \\
        28 & 9.34566047 \\
        29 & 9.94055302 \\
        30 & 10.5007497
    \end{array}
    $$
This means that the distribution is very tightly clustered toward the smaller side. However, it has extreme outliers: the maximal gap between words of length $30$ is $8030$, which is more than $764$ standard deviations away from the mean! Somehow, this should be typical: as we noted already, the size of the maximal gap appears to grow exponentially, but the same is not true of either the average gap or the standard deviation.

\section{Modular Distribution}\label{section: non-archimedean equidistribution}

Let us now consider the relative density of Ulam words. As we mentioned earlier, the set of Ulam words is preserved under the complement map. This forces a symmetry on congruence classes.

\begin{theorem}
If $w \in \Ulam_n$ then $\pi^{-1}\left(2^{n+1} - 1 - \pi(w)\right) \in \Ulam_n$. Consequently, for any positive integer $N$ and $a \in \Z/N\Z$,
    $$
    \rho_{a,N}(n) = \rho_{2^{n+1}-1-a,N}(a).
    $$
\end{theorem}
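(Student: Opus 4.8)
The plan is to deduce everything from the one structural fact we already have about the complement map --- that $\Ulam$ is closed under $w \mapsto \hat w$ (Bade et al.) --- combined with the elementary observation that, restricted to words of a fixed length $n$, complementation is exactly the arithmetic involution $x \mapsto 2^n - 1 - x$ under the identification $\pi$. So the first move is to record the bit-level identity: if $w = w_1 w_2 \cdots w_n$ with each $w_i \in \{0,1\}$, then $\pi(w) = \sum_{i=1}^n w_i 2^{n-i}$ and hence $\pi(\hat w) = \sum_{i=1}^n (1-w_i) 2^{n-i} = (2^n - 1) - \pi(w)$. (For a word of length $n$ the relevant constant is $2^n - 1$, so the exponent in the statement should read $2^n$ rather than $2^{n+1}$.)

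Next I would observe that complementation preserves length, so $w \mapsto \hat w$ carries $\Ulam_n$ to $\Ulam_n$; since $\pi$ is injective on $\Ulam_n$, the arithmetic map $\sigma_n \colon x \mapsto 2^n - 1 - x$ therefore restricts to an involution of the finite set $\pi(\Ulam_n)$. In particular $\pi^{-1}(2^n - 1 - \pi(w)) = \hat w \in \Ulam_n$, which is the first assertion. To get the statement about relative densities, fix $N > 1$ and $a \in \Z/N\Z$ and push $\sigma_n$ through reduction modulo $N$: a word $w \in \Ulam_n$ with $\pi(w) \equiv a \pmod N$ is sent by complementation to $\hat w \in \Ulam_n$ with $\pi(\hat w) \equiv 2^n - 1 - a \pmod N$, and conversely, so we obtain a bijection
$$
\left\{w \in \Ulam_n : \pi(w) \equiv a \bmod N\right\} \;\longleftrightarrow\; \left\{w \in \Ulam_n : \pi(w) \equiv 2^n - 1 - a \bmod N\right\}.
$$
Taking cardinalities and dividing by $\#\Ulam_n$ yields $\rho_{a,N}(n) = \rho_{2^n - 1 - a,\, N}(n)$.

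There is essentially no hard step here; it is a bookkeeping argument. The two points that need a moment of care are (i) that complementation genuinely fixes the word length, so that we remain inside $\Ulam_n$ and the identity $\pi(\hat w) = 2^n - 1 - \pi(w)$ holds with the correct power of two, and (ii) that although $\pi$ is not injective on all of $S[\{0,1\}]$, it \emph{is} injective on each $\Ulam_n$, which is precisely what lets us transport the arithmetic involution $\sigma_n$ back into a statement about Ulam words and hence about the counts defining $\rho_{a,N}$.
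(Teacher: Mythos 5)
Your proof is correct and follows essentially the same route as the paper's: closure of $\Ulam$ under complementation plus the bit-level identity $\pi(\hat w) = 2^n - 1 - \pi(w)$, followed by reduction modulo $N$. You are also right to flag the constant: the paper's $2^{n+1}-1$ is an off-by-one slip (its proof expands $x = a_n 2^n + \cdots + a_0$, i.e.\ $n+1$ bits for a length-$n$ word), the correct involution being $x \mapsto 2^n - 1 - x$; likewise the argument in the displayed density identity should be $n$ rather than $a$.
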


\begin{proof}
Given $w \in \Ulam_n$, write $x = \pi(w) = a_n 2^n + \ldots + a_0$ in binary. Then
    $$
    \pi\left(\hat{w}\right) = (1-a_n)2^n + \ldots + (1 - a_0) = 2^{n + 1} - 1 - x.
    $$
But $\hat{w} \in \Ulam_n$. Now, observe that if $\pi(w) = a \mod N$, then $2^{n+1} - 1 - \pi(w) = 2^{n + 1} - 1 - a \mod N$, which forces the equality of the relative densities.
\end{proof}

For $N = 2,3$, this is particularly simple.

\begin{corollary}
For any positive integer $n$, $\rho_{0,2}(n) = \rho_{1,2}(n)$. Furthermore, for any $a \in \Z/3\Z$,
    $$
    \rho_{a,3}(n) = \begin{cases}
        \rho_{1-a,3}(n) & \text{if } n = 0 \mod 2 \\
        \rho_{-a,3}(n) & \text{if } n = 1 \mod 2.
    \end{cases}
    $$
\end{corollary}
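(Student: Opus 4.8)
The plan is to specialize the preceding theorem to $N=2$ and $N=3$ and track what the substitution $a \mapsto 2^{n+1}-1-a$ does to residue classes. The content of the corollary is nothing more than a bookkeeping exercise: the theorem tells us that $\rho_{a,N}(n) = \rho_{2^{n+1}-1-a,N}(n)$, so we just need to compute $2^{n+1}-1-a \bmod N$ for $N = 2$ and $N = 3$.

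\begin{proof}
We apply the previous theorem. For $N = 2$, note that $2^{n+1} - 1$ is odd, so $2^{n+1} - 1 - a \equiv 1 - a \equiv 1 + a \pmod 2$; in particular, if $a = 0$ then $2^{n+1}-1-a \equiv 1 \pmod 2$, giving $\rho_{0,2}(n) = \rho_{1,2}(n)$.

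For $N = 3$, recall that $2 \equiv -1 \pmod 3$, so $2^{n+1} \equiv (-1)^{n+1} \pmod 3$. If $n$ is even, then $n + 1$ is odd and $2^{n+1} \equiv -1 \pmod 3$, hence $2^{n+1} - 1 - a \equiv -1 - 1 - a \equiv -2 - a \equiv 1 - a \pmod 3$, so $\rho_{a,3}(n) = \rho_{1-a,3}(n)$. If $n$ is odd, then $n+1$ is even and $2^{n+1} \equiv 1 \pmod 3$, hence $2^{n+1} - 1 - a \equiv 1 - 1 - a \equiv -a \pmod 3$, so $\rho_{a,3}(n) = \rho_{-a,3}(n)$. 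This is exactly the claimed case split.
\end{proof}

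The only subtlety—and it is a minor one—is being careful that the substitution is applied to the residue class, not literally to the integer $a$: the previous theorem already handles that by noting $\pi(w) \equiv a \pmod N$ forces $2^{n+1}-1-\pi(w) \equiv 2^{n+1}-1-a \pmod N$, and the reduction of $2^{n+1} \bmod N$ is the entire computation. There is no real obstacle here; if anything, the main thing to verify is that the statement of the theorem as printed is self-consistent (the printed version writes $\rho_{2^{n+1}-1-a,N}(a)$ with an $a$ in the last slot, which should read $(n)$—a typo one should silently correct when invoking it). Everything else is elementary modular arithmetic using $2 \equiv -1 \pmod 3$.
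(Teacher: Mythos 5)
Your proof is correct and follows the same route as the paper: specialize the preceding theorem and reduce $2^{n+1}-1-a$ modulo $2$ and modulo $3$ using $2 \equiv -1 \pmod 3$, splitting on the parity of $n$. The paper's own proof is exactly this computation (and yes, the $\rho_{2^{n+1}-1-a,N}(a)$ in the theorem statement is a typo for $\rho_{2^{n+1}-1-a,N}(n)$, as you noted).
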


\begin{proof}
Observe that $2^{n+1}-1-x = x + 1 \mod 2$, from which $\rho_{0,2}(n) = \rho_{1,2}(n)$ follows immediately. For the second part, observe that
    $$
    2^{n+1} - 1 - x \mod 3 = \begin{cases}
        1 - x & \text{if } n = 0 \mod 2\\
        -x & \text{if } n = 1 \mod 2.
    \end{cases}
    $$
\end{proof}

While there must always exist for any $n$ two congruence classes $a,b \in \Z/3\Z$ such that $\rho_{a,3}(n) = \rho_{b,3}(n)$, there is no reason why the last congruence class $c$ should be roughly equal. Indeed, for $n \leq 5$, $\rho_{c,3}(n) = 0$. However, for larger $n$, it does appear to be the case that $\rho_{c,3}(n) \rightarrow \rho_{a,3}(n) = \rho_{b,3}(n)$; as we will illustrate presently. To help measure the extent to which words are equidistributing modulo $N$, we define the modular discrepancy.


\begin{definition}
For any positive integers $n,N$, the \emph{modular discrepancy} is
    $$
    d_N(n) := \max_{a,b \in \Z/N\Z} \left|\rho_{a,N}(n) - \rho_{b,N}(n)\right|.
    $$
\end{definition}

Trivially, saying that Ulam words equidistribute modulo $N$ is equivalent to saying that $d_N(n) \rightarrow 0$ as $n \rightarrow \infty$. Moreover, by appealing to the Chinese remainder theorem, proving that $d_N(n) \rightarrow 0$ for all $N$ is reducible to proving that $d_{p^k}(n) \rightarrow 0$ for all prime powers $p^k$. To investigate Conjecture \ref{conjecture: nonarchimedean equidistribution}, we computed $d_{p^k}(n)$ for all prime powers $p^k < 30$---as near as we can tell, $d_{p^k}(n)$ decays exponentially as a function of $n$ (see Figure \ref{fig:modular_discrepancy_plot}.)

\begin{figure}[t]
    \centering
    \includegraphics[width=0.6\linewidth]{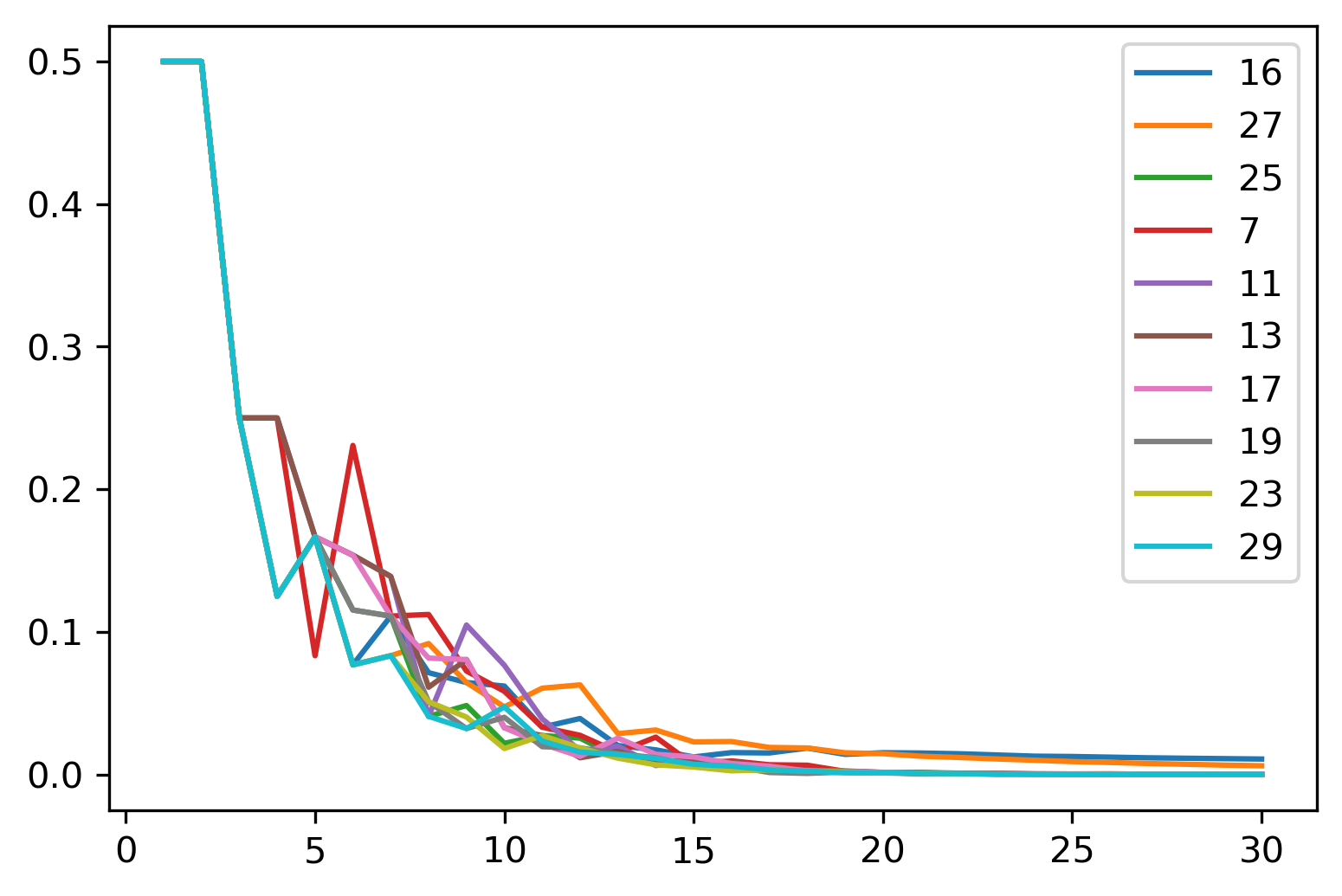}
    \caption{A plot of the modular discrepancies for prime power moduli $p^k < 30$.}
    \label{fig:modular_discrepancy_plot}
\end{figure}

\subsection*{Acknowledgments.}
Our collaboration was funded by four Bates College grants, all awarded by the Dean of Faculty's office: a STEM Faculty-Student Summer Research Grant and three Summer Research Fellowships. We would also like to thank Tom\'as Oliveira e Silva for confirming our computations of $\#\Ulam_n$ and giving many helpful suggestions for improving the exposition.

\bibliography{References}
\bibliographystyle{plain}

\end{document}